\providecommand{\U}[1]{\protect\rule{.1in}{.1in}}
\newtheorem{theorem}{Theorem}[section]
\newtheorem{definition}[theorem]{Definition}
\newtheorem{lemma}[theorem]{Lemma}
\newtheorem{proposition}[theorem]{Proposition}
\theoremstyle{remark}
\newtheorem{remark}[theorem]{Remark}
\newcommand{\R}{\mathbb{R}}
\newcommand{\N}{\mathbb{N}}
\newcommand{\te}{\textrm}
\newcommand{\veps}{\varepsilon}
\DeclareMathOperator{\inte}{int}
\DeclareMathOperator{\dist}{dist}
\DeclareMathOperator{\relint}{resint}
\DeclareMathOperator{\diam}{diam}
\DeclareMathOperator{\esssup}{esssup}
\DeclareMathOperator{\Lip}{Lip}
\newcommand{\charf}{\mathds{1}}
\definecolor{mygreen}{rgb}{0.1,0.75,0.2}
\title{On the rate of convergence of empirical measures in $\infty$-transportation distance}
\author{Nicol\'as Garc\'ia Trillos and  Dejan Slep\v{c}ev}
\keywords{Optimal transportation,  optimal matching, infinity transportation distance, min-max distance, empirical measure,  }
\subjclass{60B10, 60D05, 05C70}
\begin{document}
\newcounter{broj1}
\date{\today}
\maketitle

\begin{abstract}
We consider random i.i.d. samples of absolutely continuous measures on bounded connected domains.
We prove an upper bound on the $\infty$-transportation distance between the measure and the empirical measure of the sample. The bound is optimal in terms of scaling with the number of sample points. 
\end{abstract}

\section{Introduction}
Consider a bounded, open set $D \subset \R^d$.
Given two probability measures $\nu$ and $\mu$ on $D$ the \emph{$\infty$-transportation distance }between $\nu$ and $\mu$ is defined by
\begin{equation} \label{WINF}
d_\infty(\nu, \mu):= \inf \left\{  \esssup_\gamma \{ |x-y| \::\: (x,y) \in D \times D \}  \: : \: \gamma \in \Gamma(\nu, \mu)  \right\},
\end{equation}
where $ \Gamma(\nu, \mu)$ is the set of all {\em couplings}  (transportation plans) between $\nu$ and $\mu$, that is, the set of all  probability measures on $D \times D$ for which the marginal on the first variable is $\nu$ and the marginal on the second variable is $\mu$. More precisely:
\[  \Gamma(\nu, \mu) = \{ \gamma \in \mathcal P(D \times D)\::\: (\forall A - \te{Borel}) \;\, 
\gamma(A \times D) = \nu(A), \; \gamma(D \times A) = \mu(A) \}.  \]

We consider the $\infty$-transportation distance between a given measure, $\nu$, and the empirical measure associated to a random i.i.d sample drawn from the measure $\nu$. We consider $\nu$ which is absolutely continuous with respect to the Lebesgue measure. Our main result is the following upper bound.
\begin{theorem}
Let $D\subseteq \R^d$ be a bounded, connected, open set with Lipschitz boundary. Let $\nu$ be a probability measure on $D$ with density $\rho: D \rightarrow (0,\infty)$ such that there exists $\lambda \geq 1$ for which 
\begin{equation}
(\forall x \in D) \quad \frac{1}{\lambda} \leq \rho(x) \leq \lambda.
  \label{DensityBound}
  \end{equation}
 Let $X_1, \dots, X_n, \dots$ be i.i.d. samples from $\nu$. Consider $\nu_n$ the empirical measure 
$$  \nu_n:=  \frac{1}{n}\sum_{i=1}^{n} \delta_{X_i}.  $$
Then, for any fixed $\alpha >2$, except on a set with probability $O(n^{-\alpha/2})$, 
$$ d_\infty(\nu , \nu_n) \leq C
\begin{cases}
 \frac{\ln(n)^{3/4}}{n^{1/2}},  & \te{if } d =2, \medskip \\
 \frac{\ln(n)^{1/d}}{n^{1/d}},  & \te{if } d \geq 3,
\end{cases}
$$
where $C$ depends only on $\alpha $, $D$, and $\lambda$.
\label{main}
\end{theorem}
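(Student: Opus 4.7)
The plan is to prove the bound by first reducing to the uniform measure on a standard reference domain, and then carrying out a multiscale (dyadic) construction of the coupling. To begin, I would use the hypotheses that $D$ has Lipschitz boundary and that $\rho$ is pinched between $1/\lambda$ and $\lambda$ to produce a bi-Lipschitz map $T: D \to D'$ pushing $\nu$ forward to the uniform probability measure on a reference domain $D'$ (e.g.\ a cube, or a finite union of cubes obtained by a Whitney-type decomposition of $D$). Such a map exists by Dacorogna--Moser theory applied to $\det(DT) = \rho$, and its bi-Lipschitz constant depends only on $\lambda$ and the geometry of $D$. Since $d_\infty$ transforms quasi-isometrically under bi-Lipschitz maps, the problem reduces to the uniform case.

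Next, on the reference domain I would introduce a dyadic partition and apply Bernstein's inequality on every dyadic cube. For a cube $Q$ with $\nu(Q) \geq C\ln(n)/n$ one obtains
\[
|\nu_n(Q) - \nu(Q)| \leq C\sqrt{\nu(Q)\ln(n)/n}
\]
with probability $\geq 1 - n^{-\beta}$ for any prescribed $\beta$. Because there are only $O(n/\ln n)$ relevant cubes (aggregated across all dyadic scales coarser than $h := (\ln n/n)^{1/d}$), a union bound gives all of these estimates simultaneously on the exceptional set of probability $O(n^{-\alpha/2})$ after choosing $\beta = \beta(\alpha)$ large enough.

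To translate these concentration estimates into a bound on $d_\infty$ I would invoke the Hall-type duality: $d_\infty(\nu,\nu_n) \leq \varepsilon$ is equivalent to $\nu(A) \leq \nu_n(A^\varepsilon)$ for every Borel $A \subset D'$. By a standard discretization this reduces to verifying the inequality when $A$ is a union of dyadic cubes at a scale comparable to $\varepsilon$. For $d \geq 3$ I would take the finest scale to be $h \sim (\ln(n)/n)^{1/d}$, at which the expected per-cube count $\sim \ln n$ dominates its Bernstein fluctuation $\sqrt{\ln n}$; a one-step neighbor-balancing argument then yields $d_\infty \leq Ch$, with contributions from coarser scales forming a geometric series dominated by $h$.

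The main obstacle is the $d = 2$ case, where the naive one-scale argument produces only the rate $(\ln(n)/n)^{1/2}$ rather than the claimed $(\ln n)^{3/4}n^{-1/2}$. Recovering the sharper exponent requires implementing a Leighton--Shor-type iterated balancing that pairs the Bernstein discrepancy $\sqrt{\nu(Q)\ln n/n}$ on cubes of side $2^{-k}$ against the scale $2^{-k}$ of the redistributing flow, summed across all dyadic scales between $h$ and $1$. In dimension two this sum is genuinely logarithmic and contributes an extra $(\ln n)^{1/4}$ factor, exactly matching the theorem; in dimension $d \geq 3$ the analogous sum converges. The principal technical difficulty is executing this multiscale construction so that the \emph{maximal} displacement (rather than the total $L^1$ cost as in the classical Ajtai--Koml\'os--Tusn\'ady scheme) is controlled, and checking that the resulting flow is compatible with the Hall-type marginal constraint at every stage.
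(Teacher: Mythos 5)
Your first reduction step has a fundamental gap. You propose to use Dacorogna--Moser theory to produce a \emph{bi-Lipschitz} map $T$ with $\det(DT) = \rho$, pushing $\nu$ forward to the uniform measure on a reference domain, and then transfer the $d_\infty$ estimate through $T$. But the hypothesis on $\rho$ is only that it is bounded above and below (it need not be H\"older, or even continuous), and for such densities no bi-Lipschitz solution need exist: Burago and Kleiner (and independently McMullen) constructed $L^\infty$ densities on $[0,1]^2$ bounded above and below by positive constants that are \emph{not} the Jacobian determinant of any bi-Lipschitz homeomorphism. Dacorogna--Moser gives a diffeomorphic solution only under H\"older (or better) regularity of $\rho$, and its Lipschitz norms degenerate as the modulus of continuity worsens. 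So the reduction to the uniform case via a density-flattening bi-Lipschitz map is not available, and this is not a technicality: it is exactly the obstruction the paper is designed to circumvent. The paper never flattens the density. Instead it keeps $\rho$ nonuniform, performs a dyadic partition into boxes of equal $\nu$-mass (the density bound controls their aspect ratios), and proves a separate deterministic theorem bounding $d_\infty(\nu_1,\nu_2)$ by $\|\rho_1-\rho_2\|_{L^\infty}$ to compare the piecewise-constant approximations at successive scales. A bi-Lipschitz map \emph{is} used in the paper, but only to transfer the \emph{domain geometry} (mapping Lipschitz $D$ to a cube or to a well-partitioned polytopal domain), never to eliminate the density.

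Your account of the $d=2$ case also contains an error that hides the real difficulty. Summing the per-scale Bernstein discrepancy $\sqrt{\nu(Q)\ln n/n}$ against the redistributing displacement $2^{-k}$ over dyadic scales between $h=(\ln n/n)^{1/2}$ and $1$ does \emph{not} give an extra $(\ln n)^{1/4}$ factor; the sum is $\sum_{k\le k_n} 2^{-k/2}\cdot(2^k\ln n/n)^{1/2}\sim k_n\,(\ln n/n)^{1/2}\sim(\ln n)^{3/2}/n^{1/2}$, i.e.\ an extra full power of $\ln n$. This is exactly why the AKT-type scheme is suboptimal at $d=2$. Reaching $(\ln n)^{3/4}/n^{1/2}$ requires a qualitatively different ingredient: a uniform discrepancy bound not over dyadic cubes but over \emph{all simply connected regions bounded by curves on a grid}, at the optimal rate $L\cdot\mathrm{length}(C)\sqrt{n}\,(\ln n)^{3/4}$, followed by a Hall marriage argument to extract a matching. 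The paper gets this from Talagrand's generic-chaining bound (Lemma 3.8 in the paper) applied to the process $C\mapsto\sum_i(\chi_{C^\circ}(X_i)-\nu(C^\circ))$ indexed by curves, and the $(\ln n)^{3/4}$ arises from the estimate $\gamma_2(\mathcal C(w,k),d_2)\lesssim 2^k\sqrt n(\ln n)^{3/4}$, not from summing per-scale cube fluctuations. Finally, even setting aside both issues, your handling of general domains $D$ by ``a Whitney-type decomposition into cubes'' leaves unaddressed the key obstacle the paper devotes Section 4 and the Appendix to: after partitioning $D$ into pieces, the empirical mass on each piece differs from the true mass, and in the $\infty$-distance this discrepancy cannot be smeared out globally; it must be routed between pieces through explicitly constructed ``gates,'' which is why the paper introduces the well-partitioned property and proves by induction on the number of polytopes.
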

We also establish that the bound above is optimal in terms of scaling in $n$.

\subsection{Background.}
The $\infty$-transportation distance $d_\infty(\nu, \mu)$ is the least possible maximal distance that a transportation plan between $\nu$ and $\mu$ has to move the mass by. 
Related to it is the $p$-transportation distance (i.e. Monge-Kantorovich-Rubinstein or $p$-Wasserstein distance)  which measures the average of the power of the distance the mass is moved by: For $1 \leq p < \infty$
\[ d_p(\nu, \mu):=  \left ( \inf  \left\{  \int |x-y|^p d \gamma(x,y) \; :  \: \gamma \in \Gamma(\nu, \mu)  \right\} \right )^\frac{1}{p}. \]
It metrizes  the weak convergence of measures on $D$ (for $D$  bounded).
If follows from the work of Dudley \cite {Dud} that for $1 \leq p <\infty$ and  $d \geq 3$ and under rather general conditions on $\nu$ (weaker than ones assumed in Theorem \ref{main}) that the expected $p$-transportation distance between a measure $\nu$
and the empirical measure $\nu_n$ scales as $n^{-1/d}$:
\[ d_p(\nu, \nu_n) \sim  n^{-1/d}  \quad \te{ for } d \geq 3. \]

A related problem consists on comparing two measures $\nu_n$ and $\mu_n$ both of which are discrete measures with the same number of points of the same mass. Then the $\infty$-transportation distance $d_\infty(\nu_n, \mu_n)$ is also known as the min-max matching distance. There are a number of works on the matchings in the case that $\mu_n$ and $\nu_n$ are measures on cube the $(0,1)^d$ and $\nu_n$ is the empirical measure of a i.i.d. samples drawn from the Lebesgue measure and $\mu_n$ is either another empirical measure of another independent sample or a measure supported on a regular grid. It is worth remarking that the discrete matching results imply the estimates on the distance between $\nu_n$ and $\nu$. The converse also holds. 
 
Ajtai, Koml{\'o}s, and Tusn{\'a}dy in \cite{AKT} showed optimal bounds on the $p$-transportation distance, for $1 \leq p < \infty$, between two empirical measures sampled from the Lebesgue measure on a square. That is they showed that if $X_1, \dots, X_n, \dots \in (0,1)^2$ and 
$Y_1, \dots, Y_n, \dots \in (0,1)^2$ are two independent samples and $\mu_n = \frac{1}{n} \sum_{i=1}^n \delta_{Y_i}$, while $\nu_n$ is as before then the minimum
over all permutations $\pi$ of $\{1, \dots, n\}$ satisfies
\[ \min_\pi \frac{1}{n} \sum_{i=1}^n |X_{\pi(i)} - Y_{i}| \leq C \sqrt{\frac{\log n}{n}} \]
with probability $1- o(1)$. They introduced the technique of obtaining probabilistic estimates by dyadically dividing the cube into $2^k$ subcubes, obtaining a matching estimate at the fine level and estimating the transformations needed to bridge different scales to obtain an upper bound on the total distance. Our proof also relies on a similar decomposition of the domain. 

Dobri\'c and Yukich \cite{DobYuk}, Talagrand \cite{Talagrand}  and Talagrand, Yukich,\cite{TalagrandYukich}, Bolley,  Guillin, and Villani \cite{BolGuiVil07}, Boissard \cite{Boissard},
and others later refined these results and obtained more precise information on the distribution of the $p$-transportation distance between a measure on a cube and the empirical measure. 

%
%
%

For the $\infty$-transportation distance obtaining estimates is more delicate, since almost all of the mass needs to be matched within the desired distance to obtain the bound. Furthermore
 the optimal scaling itself has a logarithmic correction compared to the case $p< \infty$. 

The optimal scaling in dimension $d=2$, for $\nu$ being the Lebesgue measure, was obtained by Leighton and Shor \cite{LeightonShor}. They consider i.i.d random samples $X_1, \dots, X_n$ 
distributed according to the Lebesgue measure and points $Y_1, \dots, Y_n$ on a regular grid. They 
showed that  there exist $c>0$ and $C>0$ such that with very high probability:
\begin{equation}
 \frac{c(\log n)^{3/4}}{n^{1/2}} \leq \min_{\pi}  \max_{i} |X_{\pi(i)} - Y_{i}|  \leq \frac{ C(\log n)^{3/4}}{n^{1/2}},
 \label{LeighShor1}
\end{equation}
where $\pi$ ranges over all permutations of $\left\{1, \dots , n \right\}$. In other words,  when $d=2$, with high probability the $\infty$-transportation distance between the measure $\mu_n$ and the measure $\nu_n$ is of order $\frac{ (\log n)^{3/4}}{n^{1/2}}$. 

For $d\geq 3$,  Shor and Yukich \cite{ShorYukich} proved the analogous result on $(0,1)^d$ with $\nu$ being the Lebesgue measure restricted to $(0,1)^d$.
They showed that there exist $c>0$ and $C>0$ such that with very high probability:
\begin{equation}
 \frac{c(\log n)^{1/d}}{n^{1/d}} \leq \min_{\pi}  \max_{i} |X_{\pi(i)} - Y_{i}|  \leq \frac{ C(\log n)^{1/d}}{n^{1/d}}.
 \label{ShorYukich1}
\end{equation}
The result in dimension $d \geq 3$ is based on the matching algorithm introduced by Ajtai, Koml\'os, and Tusn\'ady in \cite{AKT}.  
For $d=2$ the AKT scheme still gives an upper bound, but not a sharp one.
As remarked in \cite{ShorYukich}, there is a crossover in the nature of the matching when $d=2$: for $d \geq 3$, the matching length between the random points and the points in the grid is determined by the behavior of the points locally, for $d=1$ on the other hand, the matching length is determined by the behavior of  random points globally, and finally for $d=2$ the matching length is determined by the behavior of the random points at all scales. At the level of the AKT scheme this means that for $d \geq 3$ the major source of the transportation distance is at the finest scale, for $d=1$ at the coarsest scale, while for $d=2$ distances at all scales are of the same size (in terms of how they scale with $n$). The sharp result in dimension $d=2$ by Leighton and Shor required a more sophisticated matching procedure; an alternative proof was  provided by Talagrand \cite{Talagrand} who also provided 
more streamlined and conceptually clear proofs in \cite{TalagrandGenericChain,TalagrandNewBook}.


In this paper, our main contribution is that we extend the previous results to general domains and general densities. 

\subsection{Outline of the approach.}

One of the main steps in the proof of Theorem  \ref{main} consists on establishing estimates on the $\infty$-transportation distance between two measures which are absolutely continuous  with respect to the Lebesgue measure and whose densities are bounded from above and below by positive constants.  We prove the following result which is of interest on its own.
\begin{theorem}
Let $D \subset \R^d$ be a bounded, connected, open set with Lipschitz boundary. Let $\nu_1, \nu_2$ be measures on $D$ of the same total mass:  $  \nu_1(D)= \nu_2(D)$. Assume the measures are absolutely continuous with respect to Lebesgue measure and let $\rho_1$ and $\rho_2$ be their densities. Furthermore assume that for some $\lambda > 1$, for all $x \in D$
\begin{equation}
\frac{1}{\lambda} \leq \rho_i(x) \leq \lambda  \quad \te{ for } i=1,2.
\label{DensityBoundLInfinity}
\end{equation}
Then, there exists a constant $C(\lambda, D)$ depending only on $\lambda$ and $D$ such that for all
 $\nu_1$, $\nu_2$ as above
$$  d_\infty(\nu_1, \nu_2) \leq C(\lambda, D) \| \rho_1 - \rho_2\|_{L^\infty(D)}. $$
\label{PropositionLinfinityWinfinityGeneralDomain}
\end{theorem}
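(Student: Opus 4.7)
The plan is to construct an explicit transport map from $\nu_1$ to $\nu_2$ by the Dacorogna--Moser flow method: solve a divergence equation, form an interpolating flow, and read off the displacement bound.

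First, I would reduce to the case of smooth densities by mollifying $\rho_1$ and $\rho_2$ (extended in a suitable way past $\partial D$). This preserves the total mass and the bounds \eqref{DensityBoundLInfinity} up to an arbitrarily small loss in $\lambda$, does not increase $\|\rho_1 - \rho_2\|_{L^\infty}$, and lets us work with classical flows. The general case follows by passing to the limit, using lower semicontinuity of $d_\infty$ with respect to weak convergence of marginals.

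Next, set $\rho_t := (1-t)\rho_1 + t\rho_2$, so that $1/\lambda \leq \rho_t \leq \lambda$ and $\partial_t \rho_t = \rho_2 - \rho_1$. I would seek a vector field $u : D \to \R^d$ solving
$$ \divergence u = \rho_1 - \rho_2 \text{ in } D, \qquad u \cdot n = 0 \text{ on } \partial D, $$
together with the crucial $L^\infty$ estimate $\|u\|_{L^\infty(D)} \leq C(D)\|\rho_1 - \rho_2\|_{L^\infty(D)}$. Since $\int_D (\rho_1 - \rho_2)\,dx = 0$, this is compatible. Define $v_t := u/\rho_t$, so that $\|v_t\|_{L^\infty(D)} \leq \lambda C(D)\|\rho_1 - \rho_2\|_{L^\infty(D)} =: \delta$, and a direct computation gives the continuity equation
$$ \partial_t \rho_t + \divergence(\rho_t v_t) = (\rho_2 - \rho_1) + \divergence u = 0. $$
Let $T_t$ denote the flow of $v_t$ on $\overline D$ (this is well-defined because $v_t \cdot n = 0$ on $\partial D$ keeps trajectories inside). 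Then $(T_t)_\#(\rho_1 dx) = \rho_t dx$ by the standard characteristic argument, so $(T_1)_\# \nu_1 = \nu_2$. The pointwise displacement is controlled by
$$ |T_1(x) - x| \leq \int_0^1 |v_t(T_t(x))|\,dt \leq \delta, $$
and the coupling $(\id \times T_1)_\# \nu_1 \in \Gamma(\nu_1,\nu_2)$ then certifies $d_\infty(\nu_1,\nu_2) \leq \delta = C(\lambda,D)\|\rho_1 - \rho_2\|_{L^\infty(D)}$.

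The main obstacle is the $L^\infty$ divergence estimate. The classical Bogovskii operator only gives bounds in $L^p$ for $p < \infty$, and the Neumann--Laplacian route ($u = \nabla\phi$ with $\Delta \phi = \rho_1 - \rho_2$) would need $W^{2,p}$ regularity for some $p > d$, which is delicate on a merely Lipschitz boundary. I would instead build $u$ by hand: on a cube, iterated one-dimensional integration (at each step subtracting off the mean along the current coordinate to obtain a zero-mean residual in one fewer variable) yields $u$ with $\|u\|_\infty \leq C\|f\|_\infty$ and the correct boundary condition. For a bounded connected Lipschitz domain, I would decompose $D$ into a finite chain of overlapping subdomains, each bi-Lipschitz to a cube, transfer mass between consecutive pieces so that each piece carries a zero-mean source, solve on each piece by the cube construction, and patch using a partition of unity. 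Connectedness of $D$ and the Lipschitz boundary enter precisely here, to ensure the chain exists and has overlaps of positive measure; the constant $C(D)$ depends on the geometry of the decomposition.
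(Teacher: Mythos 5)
Your Dacorogna--Moser flow approach is genuinely different from the paper's and is sound in outline. The paper proves the bound on $(0,1)^d$ by a dyadic bisection scheme (Lemmas \ref{LemmaRectangleUniformtoTwoValues}--\ref{LemmaLInfinityBoundUniformVsNoise}), then extends to general domains through the notion of a well-partitioned domain (Definition \ref{WP}): a non-overlapping cover by convex polytopes each bi-Lipschitz to a cube, with the excess mass on each piece routed to a neighbor through explicit ``gates'' on shared facets, and an induction on the number of polytopes; Lipschitz domains are handled by a bi-Lipschitz reduction to smooth ones, which in turn are shown to be well partitioned by a Voronoi construction. You instead solve $\divergence u = \rho_1 - \rho_2$ with $u\cdot n = 0$ and $\|u\|_{L^\infty(D)} \leq C(D)\|\rho_1 - \rho_2\|_{L^\infty(D)}$ --- correctly obtained on a cube by iterated one-dimensional antidifferentiation, and on a Lipschitz domain by an overlapping cover of pieces bi-Lipschitz to cubes, a partition of unity, and mass redistribution through the overlaps --- and then read off the transport map from the time-one flow of $v_t = u/\rho_t$. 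Both arguments decompose $D$ into cube-like pieces and shuttle mass between them; the paper does the shuttle directly with transport estimates and no PDE, while you push it into a divergence equation and recover the map from an ODE flow. Yours buys an explicit Lipschitz transport map and isolates the analysis in a single linear operator estimate; theirs buys an elementary, self-contained argument whose well-partitioned machinery is reused verbatim in the proof of Theorem \ref{main}, so the geometric decomposition pays double. Two details in your route deserve real care rather than a wave: mollifying $\rho_i$ near a Lipschitz boundary while preserving $\int_D \rho_i^\veps\,dx=1$, the two-sided bound \eqref{DensityBoundLInfinity}, and $\|\rho_1^\veps-\rho_2^\veps\|_{L^\infty}\leq\|\rho_1-\rho_2\|_{L^\infty}$ requires an extension-plus-renormalization step before you may invoke lower semicontinuity of $d_\infty$; and the local divergence solutions on bi-Lipschitz images of cubes must be transferred by the Piola transform (not naive composition) so that both the divergence identity and the vanishing normal trace survive the change of variables, which leaves you with merely Lipschitz $u$ --- sufficient for the flow, but worth saying explicitly.
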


We use this result in proving Theorem \ref{main}. We first consider the domain $D=(0,1)^d$. Note that when the density $\rho$ is constant the result was obtained by Shor and Yukich \cite{ShorYukich} in case $d\geq 3$ and by Leighton and Shor \cite{LeightonShor} in case $d=2$. 
In dimensions $d\geq 3$ we use a dyadic decomposition similar to the one introduced by Ajtai, Koml{\'o}s, and Tusn{\'a}dy (also used by Shor and Yukich). However the fact that we adjust the densities and not the geometry of the subdomains makes it easier to handle general densities. 
 We remark that the probabilistic estimates in \cite{ShorYukich} are similar to the ones we use in our proof of Theorem \ref{main} for $d \geq 3$.
To obtain the optimal scaling when $d=2$ a more subtle approach is needed. Talagrand's  \cite{TalagrandGenericChain} proof of Leighton and Shor's theorem provides flexible tools which we adapt to nonuniform densities. 

We note that having the result on $(0,1)^d$ implies the result on any domain that is bi-Lipshitz homeomorphic to $(0,1)^d$.

To  prove Theorem \ref{main} on general domains we partition them into finite number of subdomains which can be transformed via a bi-Lipschitz map to the unit cube. The difficulty which arises is that the empirical measure on the subdomain may not have the same total mass as the restriction of the measure to the subdomain. The mass discrepancy is small, but since we seek estimates in $\infty$-transportation distance, the mass discrepancy needs to be carefully redistributed. Thus we introduce special ways to partition domains which enable for the appropriate mass exchange between subdomains.
We call the domains \emph{well partitioned} (see Definition \ref{WP}) if they admit the desired partitioning. We prove the Theorem \ref{main} on well partitioned domains using induction on the number of subdomains needed in the partition.

We then show that all connected, bounded domains with smooth boundary  can be well partitioned by a careful geometric argument which uses Voronoi tessellation.  The final step in proving Theorem \ref{main} consists on reducing the problem to domains which are well partitioned by proving that it is possible to find a bi-Lipschitz homeomorphism between an arbitrary open, connected, bounded set $D$ with Lipschitz boundary and a domain which is well partitioned. In fact, by a result in \cite{BallZ} an open and bounded domain $D$ with Lipschitz boundary is bi-Lipschitz homeomorphic to an open domain with smooth boundary. Finally, Proposition \ref{HPropertySmoothDomains} says that open and bounded domains with smooth boundary are well partitioned.

%

The paper is organized as follows. In Subsection \ref{SubsecPrelim} we introduce notation and present some results related to the $\infty$-transportation distance. In Section \ref{SecMatchingBox} we start by proving Theorem \ref{PropositionLinfinityWinfinityGeneralDomain} for $D=(0,1)^d$ and then prove Theorem \ref{main} for $D=(0,1)^d$, in Subsection \ref{SecMatchingBoxdgeq3} when $d\geq 3$ and in Subsection \ref{SecMatchingBoxdeq2} when $d=2$. In Section \ref{SecMatchingD} we define well partitioned domains and prove Theorem \ref{PropositionLinfinityWinfinityGeneralDomain} in full generality, then we prove Theorem \ref{main}. Finally, in the Appendix we prove Proposition \ref{HPropertySmoothDomains} which states that open and bounded domains with smooth boundary are well partitioned.

\subsection{Preliminaries and notation.} \label{SubsecPrelim}Let $D$ be an open and bounded domain in $\R^d$. Given a finite Borel measure $\nu $ 
and a Borel map $T: D \rightarrow D$, the \emph{push-forward} of $\nu$, denoted by $T_\sharp \nu$, is the measure such that for all $A \in \mathfrak{B}(D)$
$$ T_\sharp \nu( A) := \nu (T^{-1}(A)).$$
We say that a Borel map $T: D \rightarrow D$ is a \emph{transportation map} between $\nu $ and $\mu$ if $T_\sharp \nu = \mu$. Note that a transportation map $T$ between $\nu$ and $\mu$ induces the coupling $\gamma_T $  given by:
$$  \gamma_T := (Id \times T)_\sharp \nu.    $$
A natural question that arises from the connection between transportation maps and transportation plans is the following: in the definition of $d_\infty(\nu, \mu)$ can we restrict our attention to couplings induced by transportation maps?. The answer to this question is affirmative in case the measure $\nu$ is absolutely continuous with respect to the Lebesgue measure. In fact, this is one of the results in \cite{Champion}, where it is proved that there exists solutions to the problem \eqref{WINF} 
which are also $\infty$-\emph{cyclically monotone}, that if $\nu \ll \mathcal{L}^d$, are induced by transportation maps. In this paper $\nu$ is taken to be  $d \nu = \rho dx$, where $\rho$ is bounded above and below by positive constants and so in this setting the results in \cite{Champion} can be stated as follows: if $\nu(D) = \mu (D)$, then there exists a transportation map $T^*: D \rightarrow D$ with $T^*_\sharp \nu = \mu$ and such that 
\begin{equation}
 d_\infty(\nu, \mu)= \| T^* - Id \|_{L^\infty(D)} . 
 \label{MapsVsPlans}
 \end{equation}
The question of uniqueness of the optimal transportation map $T^*$, although interesting on its own, is not of importance for the results we present in this paper. Nevertheless, it is worth mentioning that if  $\mu$ is concentrated on finitely many points then, the transportation map $T^*$ for which \eqref{MapsVsPlans} holds is unique; this is the content of Theorem 5.4 in \cite{Champion}. In particular, if $\mu$ is taken to be  $\nu_n$, where $\nu_n$ is the empirical measure associated to data points $X_1, \dots, X_n$ sampled from $\nu$, then the uniqueness of the optimal transportation map  is guaranteed.

We remark that for any transportation map $T_n$ between $\nu$ and $\nu_n$ it holds that
 $$  d_\infty(\nu, \nu_n)  \leq \|T_n-Id\|_{L^\infty(D)}.  $$
Thus, we can estimate $d_\infty(\nu, \nu_n)$ by estimating the right hand side of the previous expression for some $T_n$.

\section{The matching results for $(0,1)^d$.}
\label{SecMatchingBox}
The first goal of this section is to prove Theorem \ref{PropositionLinfinityWinfinityGeneralDomain} for $D=(0,1)^d$. In order to do this we need a few preliminary lemmas.
 
\begin{lemma}
Let $Q \subseteq \R^d$ be a rectangular box (rectangular parallelepiped). Let $Q_1, Q_2$ be the rectangular boxes obtained from $Q$ by bisecting one of its sides.  Let $ \rho: Q \rightarrow (0,\infty)$ be given by
\begin{equation*}
\rho(x) :=\left\{ \begin{array}{l}
  c_1 , \: \te{if } \: x \in Q_1 \\
  c_2 , \: \te{if } \: x\in Q_2,
\end{array} \right.
\end{equation*}
where $c_1,c_2>0$ are such that $ 1= \frac{c_1}{2}+\frac{c_2}{2} $. Denote by $\nu$ the measure with $d\nu= \rho(x)dx$ and let $\nu_0$ be the Lebesgue measure restricted to $Q$. Then, 
$$ d_\infty(\nu_0, \nu)  \leq \frac{L}{2}  \left| \frac{\nu(Q_1)} {\nu_0(Q_1)}-1 \right|, $$
where $L$ is the length of the side of $Q$ bisected to generate $Q_1$ and $Q_2$.
\label{LemmaRectangleUniformtoTwoValues}
\end{lemma}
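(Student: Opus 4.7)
The plan is to construct an explicit transportation map $T: Q \to Q$ between $\nu_0$ and $\nu$ whose displacement in sup-norm matches the claimed bound, then apply the general inequality $d_\infty(\nu_0, \nu) \leq \|T - \mathrm{Id}\|_{L^\infty(Q)}$ recalled at the end of Subsection \ref{SubsecPrelim}. Since the density $\rho$ is separable (depending only on the coordinate along the bisected side), it suffices to solve the one-dimensional problem and then extend trivially.

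Concretely, set coordinates so that the bisected side lies along the $x_1$-axis, say $Q = [0,L] \times R$, $Q_1 = [0,L/2] \times R$, $Q_2 = [L/2, L] \times R$. Note that the mass-conservation assumption $c_1/2 + c_2/2 = 1$ gives $\nu(Q) = \nu_0(Q)$, and a direct computation identifies $|\nu(Q_1)/\nu_0(Q_1) - 1| = |c_1 - 1|$. I would then define the one-dimensional monotone rearrangement $T_1: [0,L] \to [0,L]$ pushing the uniform measure forward to the piecewise constant density $\rho$: on $[0, c_1 L/2]$ set $T_1(x) = x/c_1$, and on $[c_1 L/2, L]$ set $T_1(x) = L/2 + (x - c_1 L/2)/c_2$. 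One checks that $T_1$ is well-defined, continuous, monotone, and pushes Lebesgue measure on $[0,L]$ forward to the measure with density $\rho$ restricted to the $x_1$-axis.

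A short calculation (using $c_1 - 1 = 1 - c_2$) shows that on each of the two pieces $|T_1(x) - x|$ attains its maximum at the common endpoint $x = c_1 L/2$ with value $\frac{L}{2}|c_1 - 1|$, and is bounded by this value throughout. Extending to $Q$ by $T(x_1, x') := (T_1(x_1), x')$ gives a Borel transportation map with $T_\sharp \nu_0 = \nu$ and
\[
\|T - \mathrm{Id}\|_{L^\infty(Q)} = \|T_1 - \mathrm{Id}\|_{L^\infty([0,L])} \leq \tfrac{L}{2}|c_1 - 1| = \tfrac{L}{2}\left|\tfrac{\nu(Q_1)}{\nu_0(Q_1)} - 1\right|,
\]
which yields the desired bound on $d_\infty(\nu_0, \nu)$.

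There is essentially no obstacle here: the separability of $\rho$ reduces the problem to a one-dimensional computation involving the explicit monotone rearrangement between uniform and two-level piecewise constant densities on an interval. The only mild care is in checking that the two linear pieces of $T_1$ meet continuously and that the maximum displacement indeed occurs at the interface rather than in the interior of either piece.
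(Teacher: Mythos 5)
Your proposal is correct and takes essentially the same approach as the paper: both construct the one-dimensional monotone rearrangement along the bisected coordinate (the paper writes it as $H^{-1}\circ F$ with $H$ the cumulative distribution of $\rho$, which is exactly your piecewise-linear $T_1$), extend it trivially to the product map, and bound the displacement by $\frac{L}{2}|c_1 - 1|$.
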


\begin{proof}
Without the loss of generality we can assume that $Q= [0,L] \times \hat Q$ where $\hat Q$ is a $d-1$ dimensional rectangular box. Thus $Q_1=[0, \frac{L}{2}] \times  \hat Q$ and $Q_2=[ \frac{L}{2}, L] 
\times \hat Q$. Note that the condition $1=\frac{c_1}{2}+ \frac{c_2}{2}$ is equivalent to $\nu(Q)= \nu_0(Q)$.  Let us introduce auxiliary functions $h(t)= c_1 \charf_{[0,\frac{L}{2}] }(t) + c_2\charf_{(\frac{L}{2},L]}(t)$ and $f(t) = \charf_{[0,L]}(t)$. 
For $t \in [0,L]$ let $F(t) = \int_0^t  ds= t$ and $H(t) = \int_0^t h(s) ds$, that is, 
\begin{equation*}
H(t):=\left\{
\begin{array}{ll}
c_1 t& \te{ if } 0\leq t \leq \frac{L}{2} \\
\frac{c_1}{2} L + c_2 (t-\frac{L}{2}) & \te{ if } \frac{L}{2} \leq t \leq L.
\end{array}\right.
\end{equation*}

A direct computation shows that 
\begin{equation*}
H^{-1}\circ F(t):=\left\{
\begin{array}{ll}
\frac{t}{c_1} & \te{ if } 0 \leq t \leq \frac{c_1L}{2} 
\\
\frac{t}{c_2} + \frac{L}{2}( 1 - \frac{c_1}{c_2})& \te{ if } \frac{c_1L}{2} \leq t \leq L.
\end{array}\right.
\end{equation*}
Notice that the map $T_1:= H^{-1}\circ F$ is a transportation plan between the measures $ dt$ and $h(t) dt$.
Therefore, $T= T_1 \times I_{d-1}$ is a transportation plan between $\nu_0$ and $\nu$.

A direct computation shows that 
$$  | T(x) - x    | = | H^{-1}\circ F (x_1) - x_1|  \leq \frac{L}{2} | c_1-1|, $$
for all $x \in Q$. Since $c_1= \frac{\nu (Q_1) }{\nu_0(Q_1)} $, we conclude from the previous inequality that:
$$  \| T - Id \|_{L^\infty(Q)} \leq \frac{L}{2} \left|  \frac{\nu (Q_1) }{\nu_0(Q_1)} -1 \right|,$$
which implies the result.
\end{proof}

 \begin{lemma}
 Let $\rho: (0,1)^d \rightarrow (0, \infty)$ be integrable and let $\nu$ be the measure given by $d \nu = \rho  dx$. Let   $a = \int_{(0,1)^d}\rho(x)dx $ and denote by  $\nu_0$ the measure on $(0,1)^d$ given by $d \nu_0 = a dx$. Then,          
$$ d_\infty(\nu_0, \nu) \leq \frac{\overline C(d)}{a} \|a- \rho\|_{L^\infty((0,1)^d)},  $$
where $\overline C(d)$ is a constant that depends on $d$ only.
 \label{LemmaLInfinityBoundUniformVsNoise}
 \end{lemma}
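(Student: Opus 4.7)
My plan is to approximate $\nu$ by measures whose densities are constant on finer and finer dyadic cubes of $(0,1)^d$, and to bound the $\infty$-transportation distance between successive approximations using Lemma \ref{LemmaRectangleUniformtoTwoValues}.

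First I would reduce to the case $\|\rho - a\|_{L^\infty} \leq a/2$: otherwise $d_\infty(\nu_0, \nu) \leq \diam((0,1)^d) = \sqrt{d}$ is already dominated by the right-hand side once $\overline C(d)$ is taken large enough. Then for each $k \geq 0$ I let $\mathcal D_k$ denote the partition of $(0,1)^d$ into the $2^{dk}$ standard dyadic subcubes of side $2^{-k}$, set $\rho_k(x) := \frac{1}{|Q|}\int_Q \rho$ for $x \in Q \in \mathcal D_k$, and let $\nu_k$ be the corresponding measure. Note that $\rho_0 \equiv a$, so $\nu_0$ as in the statement coincides with this definition; moreover $\nu_k(Q) = \nu(Q)$ for every dyadic cube $Q$ of level at most $k$.

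Next I would bound $d_\infty(\nu_k, \nu_{k+1})$ by realizing, inside each $Q \in \mathcal D_k$, the refinement from $\rho_k$ to $\rho_{k+1}$ as $d$ successive bisections along the coordinate axes. Each bisection acts inside a sub-cell $R$ with current constant density $\bar c_R$, replacing it with two constants $c_1, c_2$ on the two halves of $R$ satisfying $\frac{c_1+c_2}{2} = \bar c_R$. Rescaling both the pre- and post-bisection measures by $1/\bar c_R$ (which does not alter $d_\infty$) puts us exactly in the setting of Lemma \ref{LemmaRectangleUniformtoTwoValues} and gives a transport of $L^\infty$-cost at most $\frac{2^{-k}}{2}\,|c_1/\bar c_R - 1|$. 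Since $c_1$ and $\bar c_R$ are both averages of $\rho$ they each satisfy $|\,\tacka\, - a| \leq \|\rho - a\|_{L^\infty}$, and my reduction gives $\bar c_R \geq a/2$, so the cost per bisection is at most $\frac{2^{1-k}\|\rho - a\|_{L^\infty}}{a}$. The map produced by the lemma is the identity in every coordinate but the bisected one, hence fixes cell boundaries and can be glued across the disjoint cubes at a given scale. Applying the triangle inequality across the $d$ successive bisections and across cubes yields
\[
d_\infty(\nu_k, \nu_{k+1}) \;\leq\; d \cdot \frac{2^{1-k}\|\rho - a\|_{L^\infty}}{a}.
\]

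Summing the resulting geometric series then gives $d_\infty(\nu_0, \nu_k) \leq 4d\,\|\rho - a\|_{L^\infty}/a$ uniformly in $k$. Finally, since $\rho \in L^1$, the Lebesgue differentiation theorem yields $\rho_k \to \rho$ almost everywhere, hence $\nu_k \to \nu$ weakly, and lower semicontinuity of $d_\infty$ under weak convergence passes the bound to the limit. The main step to get right is the bookkeeping in the bisection: verifying that the bisected side has length $2^{-k}$ throughout the $d$ refinement steps at level $k$, and that the intermediate cell averages of $\rho$ stay bounded below by $a/2$ so that the ratio $|c_1/\bar c_R - 1|$ remains controlled by $\|\rho - a\|_{L^\infty}/a$. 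Once that is in place everything telescopes cleanly and the claim follows with $\overline C(d)$ of order $d$.
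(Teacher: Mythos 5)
Your proposal is essentially the paper's argument: the same dyadic refinement of $(0,1)^d$, the same replacement of $\rho$ by its cellwise averages $\rho_k$, the same application of Lemma~\ref{LemmaRectangleUniformtoTwoValues} inside each cell (after rescaling by the current cell average), and the same geometric-series summation. Whether you index by full dyadic generations $\mathcal D_k$ ($2^{dk}$ cubes) and unroll each step into $d$ single bisections, or by the paper's $\mathcal G_k$ ($2^k$ boxes, one bisection per step), is purely cosmetic; your bookkeeping that each bisected edge has length exactly $2^{-k}$ and that every intermediate cell average lies in $[a-\|\rho-a\|_\infty,\,a+\|\rho-a\|_\infty]$ is correct. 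The per-bisection cost bound $\frac{2^{1-k}}{a}\|\rho-a\|_\infty$ and the resulting $d_\infty(\nu_0,\nu_k)\le \frac{4d}{a}\|\rho-a\|_\infty$ check out.

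The one place you genuinely deviate is in getting from $\nu_k$ to $\nu$. You propose $\rho_k\to\rho$ a.e.\ (Lebesgue differentiation), hence $\nu_k\to\nu$ weakly, and then invoke ``lower semicontinuity of $d_\infty$ under weak convergence.'' That fact is true on a compact domain (one way to see it: $d_\infty=\sup_{p}d_p$ and each $d_p$ is weakly lower semicontinuous, and a supremum of l.s.c.\ functions is l.s.c.), but it is not proved or cited in the paper, and is not entirely trivial; if you use it you should supply the short argument or a reference. The paper avoids it entirely and more elementarily: since $\nu_k(Q)=\nu(Q)$ for every $Q$ at level $k$, any coupling that transports only within cells gives $d_\infty(\nu_k,\nu)\le\max_Q\diam(Q)\lesssim 2^{-k/d}$ (with $\mathcal G_k$'s aspect-controlled boxes), so one truncates the telescoping sum at the first $\tilde k$ with $2^{-\tilde k/d}\le\|\rho-a\|_\infty/a$ and absorbs the remainder into the constant. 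You could graft that truncation directly onto your version (with $\mathcal D_k$, $\diam(Q)=\sqrt{d}\,2^{-k}$ even decays faster), which would make your proof fully self-contained without appealing to any limit/semicontinuity lemma. Beyond that, the approaches coincide.
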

\begin{proof}
Given that
$$ d_\infty(\nu_0, \nu) = d_\infty \left(\frac{1}{a} \nu_0 , \frac{1}{a}  \nu \right),  $$
by rescaling the densities, it is enough to prove the result for $a=1$. 

Consider first the case that $\|1- \rho\|_{L^\infty((0,1)^d)} < 1/2$.  

\emph{Step 1.} 
For every $k \in \N$ we consider a partition of $[0,1]^d$ into a family $\mathcal{G}_k$ of $2^k$ rectangular boxes. The boxes are constructed recursively. Let 
$\mathcal{G}_{0} = \{  (0,1)^d \}$. Given the collection of boxes $\mathcal{G}_k$, the collection of rectangular boxes $\mathcal{G}_{k+1}$ is obtained by bisecting each of the rectangular boxes  belonging to $\mathcal{G}_k$ through their longest side. We note that all boxes in $\mathcal G_k$ have volume $\frac{1}{2^k}$ and have the same diameter (which depends only on $k$ and $d$).

Consider  $\rho_0:=1$ and for all $k>0$ and all $Q \in \mathcal G_k$ let:
\begin{equation} \label{rhok}
\rho_k(x):=  \frac{1}{\nu_0(Q)} \int_Q \rho(z) dz =\frac{\nu(Q)}{\nu_0(Q)}  \qquad \te{for all } x \in Q.
\end{equation}
Let $\nu_k$ be the measure with density $\rho_k$. The assumption $\|1 - \rho \|_{L^\infty((0,1)^d)} < \frac12$ implies $\frac12 \leq \rho \leq \frac32$ and consequently for all $k$, $\frac12 \leq \rho_k \leq \frac32$.

Note that for all $Q \in \mathcal G_k$ and all $j \geq k$, $\nu_j(Q) = \nu_k(Q)= \nu(Q)$.  
We denote by $\nu_k \llcorner_Q$, the restriction of the measure $\nu_k$ to $Q$.
The relation of $\nu$ to $\nu_k$ on $Q$ is similar to the one of $\nu$ to $\nu_0$ on $(0,1)^d$, only that the scale is smaller. We show that estimates on $\infty$-transportation distance on the finer scale lead to the desired estimates on the macroscopic scale. Note that
\begin{equation}
d_\infty(\nu_k, \nu_{k+1})  \leq \max_{Q \in \mathcal{G}_k} d_\infty(\nu_k \llcorner_Q , \nu_{k+1}\llcorner_Q ),
\label{MatchinLinifinitySubcubes0}
\end{equation}
and that
\begin{equation}
d_\infty(\nu_k, \nu)  \leq \max_{Q \in \mathcal{G}_k} d_\infty(\nu_k \llcorner_Q , \nu\llcorner_Q ) \leq \max_{Q \in \mathcal{G}_k} \diam(Q)  \leq \frac{C}{2^{k/d}} ,
\label{MatchinLinifinitySubcubesEmpirical0}
\end{equation}
where $C$ is a constant only depending on $d$. 

\emph{Step 2.} 
Let $Q \in \mathcal{G}_k$ and let $Q_1, Q_2 \in \mathcal G_{k+1}$ be the two sub-boxes of $Q$. Then, $\nu_k(Q_1) = \frac{1}{2}\nu_k(Q) $ and $\nu_0(Q_1) = \frac12 \nu_0(Q).$ It follows that
\begin{align*}
| \nu(Q_1) - \nu_k(Q_1)| & \leq   | \nu(Q_1) - \nu_0(Q_1)| + | \nu_0(Q_1) - \nu_k(Q_1)  |
\\ &  = \| \rho - 1\|_{L^\infty((0,1)^d)}\nu_0(Q_1) + \left| \frac{1}{2}\nu _0(Q) - \frac{1}{2}\nu_k(Q) \right|
\\& \leq \| \rho - 1\|_{L^\infty((0,1)^d)}\nu_0(Q_1) +  \frac{1}{2}\| \rho-1 \|_{L^\infty((0,1)^d)}\nu_0(Q)
\\&= 2 \|\rho-1\|_{L^\infty((0,1)^d)} \nu_0(Q_1).
\end{align*}
Therefore,
\begin{equation}
\frac{| \nu(Q_1)  - \nu_k(Q_1)  |}{\nu_k(Q_1)}  \leq \frac{2 \|\rho- 1\|_{L^\infty((0,1)^d)} \nu_0(Q_1) }{ \nu_0(Q_1) /2} = 4\|\rho-1\|_{L^\infty((0,1)^d)}.  
\label{EqnDiscrepancy}
\end{equation}

\emph{Step 3.} 
For a fixed cube $Q \in \mathcal{G}_k$, denote the value of $\rho_k$ in $Q$ by $b$. Then,
$$ d_\infty(\nu_k \llcorner_Q , \nu_{k+1}\llcorner_Q ) = d_\infty \left(\frac{1}{b}\nu_k \llcorner_Q , \frac{1}{b}\nu_{k+1}\llcorner_Q \right).    $$
By Lemma \ref{LemmaRectangleUniformtoTwoValues} and by \eqref{EqnDiscrepancy}  we have
\begin{align*}    
 d_\infty\left(\frac{1}{b}\nu_k \llcorner_Q , \frac{1}{b}\nu_{k+1}\llcorner_Q \right) & \leq \frac{1}{2^{k/d}}   \left| \frac{\nu(Q_1)}{\nu_k(Q_1)} - 1 \right| 
 \\ & \leq  \frac{4}{2^{k/d}}\|\rho-1\|_{L^\infty((0,1)^d)} 
\end{align*}
From \eqref{MatchinLinifinitySubcubes0} and the previous inequality it follows that for every $k \in \N$
\begin{equation*}
d_\infty\left( \nu_k, \nu_{k+1} \right) \leq \frac{4}{2^{k/d}}\|\rho-1\|_{L^\infty((0,1)^d)}.
\end{equation*}
 Choose $\tilde{k}$ such that $2^{-\tilde{k}/d } \leq \| \rho- 1\|_{L^\infty}$. From the previous inequality and \eqref{MatchinLinifinitySubcubesEmpirical0} we deduce that
 \begin{align}
 \begin{split}
  d_\infty(\nu_0, \nu)&  \leq   \sum_{k=0}^{\tilde{k}-1} d_\infty(\nu_k , \nu_{k+1}) + d_\infty(\nu_{\tilde{k}}, \nu )   
  \\ & \leq  4 \| \rho-1\|_{L^\infty((0,1)^d)} \sum_{k=0}^{\tilde{k}-1} \frac{1}{2^{k/d}} + C \frac{1}{2^{\tilde{k}/d  }}
  \\& \leq C(d) \| \rho-1\|_{L^\infty((0,1)^d)},
  \end{split} \label{temp1}
 \end{align}
 which shows the desired result.
 \medskip
 
We now turn to case  $\|\rho-1\|_{L^\infty((0,1)^d)} \geq 1/2$. The desired estimate follows from
$$  d_\infty(\nu_0, \nu) \leq \diam((0,1)^d) = \sqrt{d} \leq 2 \sqrt{d} \| 1- \rho\|_{L^\infty((0,1)^d)}.  $$

In conclusion taking the larger of the constants of the cases above, $\overline C = \max\{C(d), 2 \sqrt{d} \}$, provides the desired estimate.
\end{proof}


\begin{proof}[Proof of Theorem \ref{PropositionLinfinityWinfinityGeneralDomain} for $D=(0,1)^d$.]
Suppose first that $\|\rho_1 - \rho_2\|_{L^\infty((0,1)^d)} \leq \frac{1}{2\lambda}$. 
Let $g(x) =  \rho_1(x)-\rho_2(x) + \frac{1}{\lambda}$. Note that $g \geq 0$ and that
\begin{align*}
\rho_1 &  = \left( \rho_2 - \frac{1}{\lambda}  \right)+ g\\
\rho_2 & = \left( \rho_2 - \frac{1}{\lambda} \right)+ \frac{1}{\lambda}.
\end{align*} 
By Lemma \ref{LemmaLInfinityBoundUniformVsNoise} and by \eqref{MapsVsPlans}, there exists a transportation map $T$ between the measures $gdx$ and  $\frac{1}{\lambda}dx$ such that 
\begin{equation*}
 \| T - Id \|_{L^\infty((0,1)^d)} \leq  \lambda C(d) \left\| g - \frac{1}{\lambda} \right \| _{L^\infty((0,1)^d)} =  \lambda C(d) \| \rho_1 - \rho_2\|_{L^\infty((0,1)^d)}.  
\end{equation*}
Note that 
\[ \gamma := (Id \times Id)_\sharp \left(\rho_2 - \frac{1}{\lambda}\right)dx + (Id \times T)_\sharp gdx\, \in \Gamma(\nu_1, \nu_2). \]
Moreover for $\gamma$-a.e. $(x,y) \in (0,1)^d \times (0,1)^d$,
$$ |x-y| \leq  \lambda C( d) \|\rho_1- \rho_2\|_{L^\infty((0,1)^d)}. $$
Thus,
$$  d_\infty(\nu_1, \nu_2) \leq  \lambda C( d) \|\rho_1- \rho_2\|_{L^\infty((0,1)^d)}.  $$
To get our estimate in case $\|\rho_1 - \rho_2\|_{L^\infty} > \frac{1}{2\lambda}$ note that:
$$  d_\infty(\nu_1, \nu_2) \leq \diam((0,1)^d) = \sqrt{d} \leq 2\lambda \sqrt{d} \| \rho_1- \rho_2\|_{L^\infty((0,1)^d)}.  $$
 \end{proof}

\begin{remark}
Note that from the previous proof, Theorem \ref{PropositionLinfinityWinfinityGeneralDomain} is true for any domain $D$ of the form $D=(a_1,b_1) \times \dots \times (a_d, b_d)$. To deduce this fact, it is enough to consider a translation and rescaling of the coordinate axes to transform the rectangular box $D$ into the unit box $(0,1)^d$ and then use Theorem \ref{PropositionLinfinityWinfinityGeneralDomain} for the unit cube.
\label{LIninfinityEstimateRectangle}
\end{remark}

\subsection{The matching results for $(0,1)^d$: $d \geq 3$.}
\label{SecMatchingBoxdgeq3}
Now we prove Theorem \ref{main} for $D=(0,1)^d$ when $d\geq 3$. To achieve this it is useful to consider a partition of the cube $(0,1)^d$ into rectangular boxes analogous to the ones used in the proof of Lemma \ref{LemmaLInfinityBoundUniformVsNoise}. The main difference is that we divide rectangular boxes into sub-boxes of the same $\nu$-measure,  instead of the same Lebesgue measure.

Let $\rho:(0,1)^d \rightarrow (0, \infty) $ be a density function satisfying $1/\lambda \leq \rho \leq \lambda$. For every $k \in \N$ we construct a family $\mathcal{F}_k$ of $2^k$ rectangular boxes 
 which partition the cube $(0,1)^d$ with each rectangular box having $\nu$-volume equal to $\frac{1}{2^k}$ and aspect ratio (ratio between its longest side and its shortest side) controlled in terms of $\lambda$. 
We let $\mathcal{F}_{0} = \{(0,1)^d \}$.  For $k=1$ we construct rectangular boxes $Q_1$ and $Q_2$ by bisecting one of the sides (say the one lying on the first coordinate) of the cube $(0,1)^d$ using the measure $\nu$. That is, we define $Q_1:= (0,a) \times (0,1)^{d-1}$ and  $Q_2:=[a,1) \times (0,1)^{d-1}$ where $a\in (0,1)$ is such that $\nu_{Q_1}=1/2 \nu(Q)$ . Recursively, the collection of rectangular boxes at level $k+1$ is obtained by bisecting, according the measure $\nu$, each rectangular box from level $k$ through its \textbf{longest} side.

\begin{lemma}
The aspect ratio of every rectangular box in $\mathcal{F}_k$ is bounded by $2 \lambda^2$.
\label{AspectRatioLemma}
\end{lemma}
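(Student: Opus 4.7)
\medskip

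\noindent\textbf{Proof plan.} My plan is to argue by induction on $k$. The base case $k=0$ is immediate since $(0,1)^d$ has aspect ratio $1\leq 2\lambda^2$. For the inductive step, I fix $Q\in\mathcal F_k$, order its side lengths $s_1\geq s_2\geq\cdots\geq s_d$ with $s_1/s_d\leq 2\lambda^2$, and analyze the two children obtained by bisecting the $s_1$-side according to $\nu$.

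The central quantitative input is a two-sided bound on the location of the cut. Writing (up to relabelling coordinates) $Q=[0,s_1]\times R$ with $R$ the $(d-1)$-dimensional face of side lengths $s_2,\dots,s_d$, the cut point $a\in(0,s_1)$ satisfies $\nu([0,a]\times R)=\tfrac12\nu(Q)$. Using $1/\lambda\leq\rho\leq\lambda$ on the numerator and denominator separately gives
\begin{equation*}
\lambda\, a\, \vol(R) \;\geq\; \tfrac{1}{2}\nu(Q) \;\geq\; \tfrac{1}{2\lambda}\, s_1\, \vol(R),
\end{equation*}
so that $a\geq s_1/(2\lambda^2)$, and the symmetric bound applied to the right half yields $s_1-a\geq s_1/(2\lambda^2)$, i.e.\ $a\leq s_1\bigl(1-\tfrac{1}{2\lambda^2}\bigr)$. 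Thus each child of $Q$ has side lengths $\{a,s_2,\dots,s_d\}$ (or $\{s_1-a,s_2,\dots,s_d\}$) with this single value of $a$ pinched into the dyadic range $[s_1/(2\lambda^2),\,s_1(1-1/(2\lambda^2))]$.

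It then remains to verify that the new aspect ratio is at most $2\lambda^2$ by comparing $a$ to the other $s_i$'s. If $a\geq s_2$, then the new longest side is $a$ and the new shortest is $s_d$, and the upper bound on $a$ combined with the inductive hypothesis $s_1/s_d\leq 2\lambda^2$ gives $a/s_d\leq 2\lambda^2(1-\tfrac{1}{2\lambda^2})=2\lambda^2-1$. If $a<s_d$, then the new longest is $s_2\leq s_1$ and the new shortest is $a$, so the new ratio is at most $s_1/a\leq s_1\cdot(2\lambda^2/s_1)=2\lambda^2$ by the lower bound on $a$. The remaining intermediate case $s_d\leq a<s_2$ keeps both the longest side and the shortest side among the original $s_i$'s, so the new aspect ratio is $s_2/s_d\leq s_1/s_d\leq 2\lambda^2$. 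In all three cases the bound $2\lambda^2$ is preserved.

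The only real subtlety is that one cannot argue by a single monotone decreasing quantity (the inductive step does not strictly contract the aspect ratio in every case); the argument only works because the degenerate scenario — where $a$ is pushed to either extreme of its allowed range — is precisely the one prevented by the bound $\rho\in[1/\lambda,\lambda]$, and the three-case split above is what converts this into a preserved uniform bound. Once this case analysis is in place, nothing else is needed.
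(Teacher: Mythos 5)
Your proof is correct and follows essentially the same route as the paper's: induction on $k$, with the density bounds $1/\lambda\leq\rho\leq\lambda$ pinching the cut location $a$ into a fixed fraction of the bisected side, followed by a case analysis on where the new side falls in the ordering of the box's side lengths. The paper organizes the case analysis slightly more economically (two cases, according to whether the new side is the shortest or not, the first handled by simply observing that the aspect ratio cannot increase) and only needs the lower bound $a\geq s_1/(2\lambda^2)$, but the substance is the same.
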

\begin{proof}
We show that for every $k\in \N$, every rectangular box in $\mathcal{F}_k$ has aspect ratio less than $2 \lambda^2$. The proof is by induction on $k$.

\emph{Base Case: }At level $k=1$ we consider $Q_1 = (0,a) \times (0,1)^{d-1}$, $a$ chosen so that $\nu(Q_1)=1/2$. Note that the aspect ratio of $Q_1$ is equal to $1/a$. Notice that, 

$$ \frac{1}{2} = \int_{Q_1} \rho(x) dx \leq a\lambda.   $$  
From this we conclude that the aspect ratio of $Q_1$ is no larger than $2\lambda$ and in particular no larger than $2\lambda^2$. By symmetry, the aspect ratio of $Q_2$ is no larger than $2 \lambda^2$.   
 
\emph{Inductive Step.} Suppose that the aspect ratio of every rectangular box in $\mathcal{F}_k$ is bounded by $2\lambda^2$. Let $Q$ be a rectangular box in $\mathcal{F}_{k+1}$. Note that $Q$ is obtained by bisecting (using the measure $\nu$) the longest side of a rectangular box $Q' \in \mathcal{F}_k$. Without the loss of generality we can assume that $\overline{Q'}= [a_1,b_1]\times [a_2, b_2] \times \dots \times [a_d, b_d]$ and that $\overline Q =  [a_1,c]\times [a_2, b_2] \times \dots \times [a_d, b_d]$ , where $a_1< c < b_1$. If $(a_1,c)$ is not the smallest side of $Q$ then the aspect ratio of $Q$ is no greater than the aspect ratio of $Q'$ and hence by the induction hypothesis is less than $2 \lambda^2$. If on the other hand $(a_1,c)$ is the smallest side of $Q$ then we let $(a_i,b_i)$ be the longest side of $Q$; the aspect ratio of $Q$ is then equal to $\frac{b_i-a_i}{c-a_1}$. Since $(a_1,b_1) $ is the longest side of $\tilde{Q}$, we have:
$$ \frac{b_i-a_i}{c-a_1} = \frac{b_1-a_1}{c-a_1}\frac{b_i-a_i}{b_1-a_1} \leq \frac{ b_1-a_1}{c-a_1}.$$ 
Finally, since
$$ \nu(Q) = \frac{1}{2} \nu(\tilde{Q}),    $$
we deduce that 
$$ (c-a_1) \lambda \geq \frac{1}{2\lambda} (b_1-a_1) .   $$
This implies the desired result.
\end{proof}

The proof of Theorem \ref{main} requires estimating how many of the sampled points fall in certain rectangles. These estimates rely on two concentration inequalities for binomial random variables, which we now recall. Let $S_m\sim\te{Bin}(m,p) $ be a binomial random variable, with $m$ trials and probability of success for each trial of $p$. Chernoff's inequality \cite{chern52} states that
\begin{equation} \label{chernoff}
P\left( \left| \frac{S_m}{m} - p \right| \geq t \right) \leq 2 \exp(-2 m t^2). 
\end{equation}
Bernstein's inequality \cite{bern24}, which is sharper for small values of $p$ gives that
\begin{equation} \label{bernstein}
P\left( \left| \frac{S_m}{m} - p \right| \geq   t \right) \leq 2 \exp \left(- \frac{ \frac12 m t^2}{ p(1-p) + \frac{1}{3} t } \right). 
\end{equation}

\begin{proof}[Proof of Theorem \ref{main} for $D=(0,1)^d$ when $d\geq 3$. ] 

\emph{Step 1.} Let $\rho_0:=\rho$ and let $\mu_0:= \nu$. For every $Q \in \mathcal F_k$, consider
\begin{equation} \label{T5rhok}
\rho_k(x) := \frac{\nu_n(Q)}{\nu(Q)} \rho(x) = \frac{\nu_n(Q)}{2^{-k}} \rho(x)  \qquad \te{for all } x \in Q.
\end{equation}
Let $\mu_k$ be the measure with density $\rho_k$. Note that for all $Q \in \mathcal F_k$, and all $j \geq k$,  $\mu_j(Q) =\mu_k(Q) = \nu_n(Q)$.
Since by construction $\nu(Q) = 2^{-k}$, $n \nu_n(Q)$ is a binomial random variable with $n$ trials and probability of success for each trial of $p = 2^{-k}$. 
Fix $\alpha >2$ and let 
$$k_n:=\log_2\left(  \frac{ n}{10 \alpha \ln n} \right). $$
Consider $k \in \N$ with $k \leq k_n$. Using Bernstein's inequality \eqref{bernstein} with $t = \frac{p}{2}$ we obtain
\begin{align}
\begin{split}
P\left( \left| \nu_n(Q) - \frac{1}{2^k} \right| \geq \frac{1}{2^{k+1}} \right) &  \leq 2 \exp \left(  - \frac{\frac12 \cdot \frac14 n p^2}{p(1-p) + \frac13 \cdot \frac12  p}  \right) \\
& \leq 2 \exp \left( - \frac{1}{10} np \right) \\
& \leq 2 \exp \left( - \frac{1}{10} n \frac{10 \alpha \ln n}{n} \right) \\
& = 2 n^{-\alpha}.
\end{split}
\label{IneqAuxMatching}
\end{align}
Since the probability of the union of events is less or equal to the sum of the probability of the events, we obtain
\begin{equation*}
P\left( \max _{Q \in \mathcal F_k} \left| \nu_n(Q) - \frac{1}{2^k} \right| \geq \frac{1}{2^{k+1}} \right) 
\leq 2^k  2 n^{- \alpha} . 
\end{equation*}
Summing over all $k  \leq k_n$, we deduce that with probability at least $1 - n^{-\alpha/2}$,
\begin{equation} \label{rhokbd}
\frac{1}{2 \lambda} \leq \rho_k \leq \frac{3 \lambda }{2} \qquad \te{on } (0,1)^d,
\end{equation}
for every $k \leq k_n$.

Let $Q \in \mathcal F_k$ and let $Q_1, Q_2 \in \mathcal F_{k+1}$ be the sub-boxes of $Q$. Let $m = n \nu_n(Q)$. Since $\nu(Q_1) = 2^{-(k+1)} = \frac12 \nu(Q)$ then, $m \frac{\nu_n(Q_1)}{\nu_n(Q)} \sim \te{Bin}(m,\frac{1}{2})$ given $\nu_n(Q)$. Using Chernoff's bound \eqref{chernoff} and \eqref{IneqAuxMatching}, we deduce that
\begin{equation*}
P\left( \left|  \frac{\nu_n(Q_1)}{\nu_n(Q)}  - \frac{1}{2} \right| \geq \sqrt{\frac{\alpha 2^k \ln n}{n}}  \right)   \leq 4 n^{-\alpha}. 
\end{equation*}
Using the previous inequality, \eqref{T5rhok} and a union bound, we conclude that
\begin{equation*} 
P\left(  \sup_{x \in (0,1)^d} \left|  \frac{\rho_{k+1}(x)}{\rho_k(x)} - 1 \right| \geq 2 \sqrt{\frac{ \alpha 2^k \ln n}{n}} \right)  \leq  2^k4 n^{-\alpha}. 
\end{equation*}
Summing over all $k  \leq k_n$, we deduce that with probability at least $1 -  n^{-\alpha/2}$,
\begin{equation} \label{rhokkp1}
\sup_{x \in (0,1)^d} \left|  \frac{\rho_{k+1}(x)}{\rho_k(x)} - 1 \right| \leq 2 \sqrt{\frac{ \alpha 2^k \ln n}{n}}
\end{equation}
for every $k \leq k_n$.

Notice that for all $Q \in \mathcal F_k$, and all $j \geq k$,  $\mu_{j}(Q) = \mu_k(Q)= \nu_n(Q)$. Then,
\begin{equation}
d_\infty(\mu_k, \mu_{k+1})  \leq \max_{Q \in \mathcal{F}_k} d_\infty(\mu_k \llcorner_Q , \mu_{k+1}\llcorner_Q ),
\label{MatchinLinifinitySubcubes}
\end{equation}
and 
\begin{equation}
d_\infty(\mu_k, \nu_n)  \leq \max_{Q \in \mathcal{F}_k} d_\infty(\mu_k \llcorner_Q , \nu_n\llcorner_Q ) \leq \max_{Q \in \mathcal{F}_k} \diam(Q)  \leq C(\lambda) \frac{1}{2^{k/d}} ,
\label{MatchinLinifinitySubcubesEmpirical}
\end{equation}
where $C(\lambda)$ is a constant only depending on $\lambda$; the last inequality in the previous expression obtained from Lemma \ref{AspectRatioLemma} and from the fact that $\nu(Q) = 2^{-k}$.

Using estimates \eqref{rhokbd} and \eqref{rhokkp1}
\[ \| \rho_k - \rho_{k+1}\|_{L^\infty((0,1)^d)}  \leq \| \rho_k \|_{L^\infty((0,1)^d)} \left\| \frac{\rho_{k+1}}{\rho_k} -1 \right\|_{L^\infty((0,1)^d)}  \leq 2 \lambda \left( \alpha 2^k \frac{\ln n}{n} \right)^{1/2}, \]
with probability at least $1 - n^{- \alpha/2}$.
Hence from Lemma \ref{AspectRatioLemma} and remark \ref{LIninfinityEstimateRectangle}, we deduce that for all $Q \in \mathcal{F}_k$ 
$$ d_\infty(\mu_k | _Q, \mu_{k+1} \llcorner_Q ) \leq  C(\lambda,d) \diam(Q) \left( \alpha 2^k \frac{\ln n}{n} \right)^{1/2} \leq C(\lambda,d)\frac{1}{2^{k/d}}  \left( \alpha  2^k \frac{\ln n}{n} \right)^{1/2}.  $$

Using \ref{MatchinLinifinitySubcubes} and the previous inequalities, we conclude that except on a set with probability $O(n^{-\alpha/2})$, for every $k=0, \dots,k_n$ 
$$  d_\infty(\mu_k, \mu_{k+1}) 
\leq C \frac{1}{2^{k/d}} \left( 2^k \frac{\ln n}{n} \right)^{1/2},  $$
for some constant $C$ depending only on $\lambda$, $\alpha$ and $d$. From the triangle inequality and \eqref{MatchinLinifinitySubcubesEmpirical}, we obtain 

\begin{align*}
 d_\infty(\nu,\nu_n ) & \leq \sum_{k=1}^{k_n}  d_\infty(\mu_{k-1}, \mu_k) + d_\infty(\mu_{k_n},\nu_n)  
\\ & \leq  C \left( \sum_{k=1}^{k_n  } \frac{1}{2^{k/d}}  \left( \alpha 2^k \frac{\ln n}{n} \right)^{1/2} + \frac{\ln n^{1/d}}{n^{1/d}} \right).
\end{align*}

Given that $d\geq 3$, the sum in the previous expression is $O(\frac{\ln n^{1/d}}{n^{1/d}})$. In summary, except on a set with probability $O(n^{-\alpha/2})$
$$ d_\infty(\nu, \nu_n)  \leq C \frac{\ln n^{1/d}}{n^{1/d}}, $$ 
where $C$ is a constant that depends on $\alpha$, $\lambda$ and $d$ only.
\end{proof}

\subsection{The matching results for $(0,1)^2$.}
\label{SecMatchingBoxdeq2}
Now we prove Theorem \ref{main} for $D=(0,1)^2$. We actually state and prove a stronger result which is in agreement with the result by Talagrand in \cite{TalagrandGenericChain}. The improvement with respect to the statement of Theorem \ref{main}, has to do with the speed of decay of the tail probability of the transportation distance. Theorem \ref{main} is an immediate consequence of the following.

\begin{theorem}
Suppose that $\rho:(0,1)^2 \rightarrow (0,\infty)$ is a density function satisfying 

\begin{equation}
  \frac{1}{\lambda} \leq \rho \leq \lambda  
  \label{lambdaBoundRhod=2}
\end{equation}

for some $\lambda>1$. Let $X_1, \dots, X_n$ be i.i.d samples from $\rho$ and denote by $\nu_n$ the empirical measure

$$ \nu_n := \frac{1}{n}\sum_{i=1}^{n}\delta_{X_i} . $$

Then, there is a constant $L >0$ depending only on $\lambda$, such that except on a set with probability $L \exp(-(\ln(n)^{3/2}  )/L  )$, we have

$$ d_\infty(\nu , \nu_n) \leq L \frac{\ln(n)^{3/4}}{n^{1/2}}. $$ 
\label{MatchingCubed=2}
\end{theorem}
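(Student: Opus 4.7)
My plan is to follow the same measure-adapted dyadic decomposition used in the $d\geq 3$ case of Section \ref{SecMatchingBoxdgeq3}, but to replace the bare telescoping estimate over scales with a Talagrand-style generic chaining argument, since in dimension two the per-scale contributions are all of the same order and a naive summation produces an extra logarithmic factor.

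\emph{Setup.} I will reuse the partitions $\mathcal{F}_k$ of $(0,1)^2$ adapted to $\nu$ as constructed before Lemma \ref{AspectRatioLemma}, so that every cell $Q\in\mathcal{F}_k$ satisfies $\nu(Q)=2^{-k}$ and has aspect ratio bounded by $2\lambda^2$. I will define intermediate measures $\mu_k$ exactly as in \eqref{T5rhok}, namely $\mu_k\llcorner_Q$ has density proportional to $\rho$ on $Q\in\mathcal{F}_k$ with total mass $\mu_k(Q)=\nu_n(Q)$. The final matching is built via a telescoping triangle inequality $d_\infty(\nu,\nu_n)\le\sum_{k=0}^{k_n-1}d_\infty(\mu_k,\mu_{k+1})+d_\infty(\mu_{k_n},\nu_n)$, for a stopping scale $k_n$ chosen so that each fine cell already has Lebesgue diameter of order $(\ln n)^{3/4}/n^{1/2}$; this fixes $2^{-k_n}$ of order $(\ln n)^{3/2}/n$. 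The terminal term $d_\infty(\mu_{k_n},\nu_n)$ is bounded by the fine-cell diameter after a concentration step, using that $n\nu_n(Q)\sim\mathrm{Bin}(n,2^{-k_n})$ concentrates strongly enough at this scale.

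\emph{Per-scale bound and its insufficiency.} For each $Q\in\mathcal{F}_k$ with sub-cells $Q_1,Q_2\in\mathcal{F}_{k+1}$, conditional on $\nu_n(Q)$, the ratio $\nu_n(Q_1)/\nu_n(Q)$ concentrates around $1/2$. By the Chernoff/Bernstein estimates \eqref{chernoff}, \eqref{bernstein}, with high probability $|\nu_n(Q_1)/\nu_n(Q)-1/2|\lesssim\sqrt{2^k\ln n/n}$; combined with Theorem \ref{PropositionLinfinityWinfinityGeneralDomain} applied in each cell (with the uniform constant $C(\lambda)$ furnished by Lemma \ref{AspectRatioLemma} and Remark \ref{LIninfinityEstimateRectangle}) this gives $d_\infty(\mu_k\llcorner_Q,\mu_{k+1}\llcorner_Q)\lesssim\mathrm{diam}(Q)\cdot\sqrt{2^k\ln n/n}\lesssim\sqrt{\ln n/n}$. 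Summing naively over $k=0,\dots,k_n$ only produces $(\ln n)^{3/2}/n^{1/2}$, which is short by a factor $(\ln n)^{3/4}$ of the target.

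\emph{Chaining (the main obstacle).} The heart of the proof, and the only nontrivial departure from the $d\geq 3$ scheme, is to show that the quantities $d_\infty(\mu_k,\mu_{k+1})$ are not merely bounded in probability by $C\sqrt{\ln n/n}$ uniformly in $k$, but that their sum is bounded by $L(\ln n)^{3/4}/n^{1/2}$ outside an event of probability $\le L\exp(-(\ln n)^{3/2}/L)$. I would adapt the generic-chaining strategy of Talagrand in \cite{TalagrandGenericChain}: instead of a union bound at every scale (which costs an entire $\sqrt{\ln n}$ per level), I would allow a controlled number of "exceptional" cells at each scale whose mass discrepancy is larger than the typical Gaussian tail, transport their excess across several scales in one stroke, and exploit the decay $\mathrm{diam}(Q)\sim 2^{-k/2}$ to pay for them globally. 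The output of this chaining is a single tail inequality of sub-Gaussian-squared type for $\sum_k d_\infty(\mu_k,\mu_{k+1})$, which yields the announced probability bound.

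\emph{Adapting Talagrand to general $\rho$.} The main technical point in porting \cite{TalagrandGenericChain} to our setting is that the partition $\mathcal{F}_k$ is not the regular dyadic grid but is $\nu$-adapted; however, by construction $\nu_n(Q)\sim\mathrm{Bin}(n,2^{-k})$ exactly, so all concentration inequalities apply verbatim, and the bounded aspect ratio (Lemma \ref{AspectRatioLemma}) together with the density bound \eqref{lambdaBoundRhod=2} imply that the Lebesgue measures and diameters of cells are comparable to $2^{-k}$ and $2^{-k/2}$ up to constants depending only on $\lambda$. Theorem \ref{PropositionLinfinityWinfinityGeneralDomain} then gives the within-cell matching from densities to densities with the correct $L^\infty$ dependence, which is exactly what the chaining argument needs at each level. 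Assembling the three pieces (coarse triangle inequality, chained sum across scales, terminal fine-scale matching) gives the stated bound and tail probability.
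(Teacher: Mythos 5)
Your plan diverges substantially from the paper's proof, and the place where you depart is exactly where the argument is incomplete. The paper does \emph{not} reuse the dyadic hierarchy $\mu_0,\mu_1,\dots,\mu_{k_n}$ from the $d\geq 3$ proof for $d=2$. It abandons the telescoping-over-scales scheme entirely and follows the Leighton--Shor/Talagrand combinatorial route: (i) a partition of $(0,1)^2$ into $n$ rectangles of $\nu$-measure $1/n$ with bounded aspect ratio (Lemma~\ref{reg_part}); (ii) discrepancy estimates $|\sum_{i\leq n}(\chi_{C^\circ}(X_i)-\nu(C^\circ))|\lesssim l(C)\sqrt{n}(\ln n)^{3/4}$ uniformly over simple closed curves $C$ on a grid of mesh $\approx(\ln n)^{3/4}/\sqrt{n}$, obtained by applying Lemma~\ref{TalagrandsTheorem} to the process indexed by curves; (iii) an ``enlarged region'' lemma converting the discrepancy bounds to $\nu(R')\geq\nu_n(R)$ for grid-unions $R$; and (iv) Hall's marriage lemma to build a direct bijection between sample points and rectangles. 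Chaining appears only inside step (ii), to control a supremum over curves --- it is not used to sum transport costs across dyadic scales.

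The gap in your proposal is the ``Chaining (the main obstacle)'' paragraph. You assert that one can recover the missing factor $(\ln n)^{3/4}$ within the $\mu_k$-telescoping scheme by allowing exceptional cells and ``transporting their excess across several scales in one stroke,'' but no mechanism is given for how this improves the sum, and there is good reason to think it cannot. As the paper itself remarks (citing Shor--Yukich), for $d=2$ the transport cost in the AKT dyadic scheme is of the \emph{same} order at every scale, so the obstruction is not that a union bound costs an extra $\sqrt{\ln n}$ per level --- it is structural: the scheme must pay roughly $\sqrt{\ln n/n}$ at each of $\sim\ln n$ levels. Talagrand's generic chaining does not let you skip levels of a transportation hierarchy; it is a device for bounding suprema of processes $\{Z_y\}_{y\in Y}$ given increment tail bounds like \eqref{TalagrandsCondition}. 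The $d_\infty(\mu_k,\mu_{k+1})$ are not a process of that form, and once you have fixed the transport map to go level-by-level, the triangle inequality forces the $(\ln n)^{3/2}/\sqrt{n}$ sum. What Leighton--Shor and Talagrand actually do is sidestep the hierarchy altogether and construct the matching globally via Hall's theorem, using the curve-discrepancy estimates to verify Hall's condition; this is the step your outline is missing. If you want to complete the proof, I would replace your chaining paragraph with: the construction of the grid $G$ and the process $Z_C=\sum_i(\chi_{C^\circ}(X_i)-\nu(C^\circ))$, the verification of the Bernstein increment condition \eqref{TalagrandsCondition} (this is where the density bound $\rho\leq\lambda$ enters, giving $\sigma^2\lesssim\lambda\|\chi_{C^\circ}-\chi_{C'^\circ}\|^2_{L^2}$), the entropy/chaining estimates for $\gamma_1,\gamma_2,D_1,D_2$ over curve classes, and then the enlarging-regions and Hall's-lemma steps.
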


In order to match the empirical measure $\nu_n$ with the measure $\nu$, we consider a partition of $(0,1)^2$ into $n$ rectangles $Q_1, \dots, Q_n$, each of which has $\nu$-measure equal to $1/n$. We then look for a bijection between the set of points $X_1, \dots, X_n$  and the set $\left\{Q_1, \dots, Q_n \right\}$, in such a way that every data point is matched to a nearby rectangle. Note however, that in order to guarantee that all points within a rectangle are close to the corresponding data point we should be able to control the diameter of all the $Q_i$s. This is is important since we want to obtain estimates on $d_\infty(\nu, \nu_n)$. With a slight modification to the construction preceding Lemma \ref{AspectRatioLemma} we obtain the following.

\begin{lemma} \label{reg_part}
Let $\rho: (0,1)^2 \rightarrow (0,\infty)$ be a density function satisfying \eqref{lambdaBoundRhod=2}, and let $\nu$ be the measure $d \nu = \rho dx$. Then, for any $n \in \N$ there exists a collection of rectangles $\{Q_i \::\: i =1, \dots, n\}$ that partitions $[0,1]^2$, such that the aspect ratio of all rectangles is less than $3 \lambda^2$ and their volume according to $\nu$ is $1/n$. In particular, for every $Q_i$
\begin{equation}
\diam(Q_i) \leq \frac{C(\lambda)}{\sqrt{n}},
\label{DiameterQi}
\end{equation}
where $C(\lambda)$ is a constant only depending on $\lambda$.
\end{lemma}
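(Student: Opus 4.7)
The plan is to adapt the dyadic bisection scheme used in the proof of Lemma \ref{AspectRatioLemma} so that it terminates after producing exactly $n$ rectangles of $\nu$-mass $1/n$, instead of $2^k$ at each level. I would assign to the starting rectangle $(0,1)^2$ the label $n$, and iterate the following rule: given a rectangle $Q'$ with label $k \geq 2$, split $Q'$ by a cut perpendicular to (one of) its longest side(s) into two closed sub-rectangles with labels $\lfloor k/2 \rfloor$ and $\lceil k/2 \rceil$, placing the cut at the unique location for which the sub-rectangle with label $m$ has $\nu$-mass $m/n$ (existence and uniqueness come from the strict positivity and continuity of $\rho$-integrals). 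After finitely many steps every rectangle carries the label $1$ and hence $\nu$-mass $1/n$, giving the desired partition.

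To bound the aspect ratios I argue by induction on the depth of the recursion, showing that every rectangle produced has aspect ratio at most $3\lambda^2$. Up to relabelling coordinates, write $Q'=[a_1,b_1]\times[a_2,b_2]$ with $(a_1,b_1)$ the longest side, and let the cut be at $x_1=c$, producing $Q=[a_1,c]\times[a_2,b_2]$ with $\nu(Q)=p\,\nu(Q')$, where $p\in\{\lfloor k/2\rfloor/k,\lceil k/2\rceil/k\}$. As in the proof of Lemma \ref{AspectRatioLemma}, if $(a_1,c)$ is not the shortest side of $Q$ then the aspect ratio of $Q$ is bounded by that of $Q'$ and the inductive hypothesis applies. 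Otherwise the aspect ratio of $Q$ equals $(b_2-a_2)/(c-a_1)\leq (b_1-a_1)/(c-a_1)$, and comparing $\nu(Q)\leq \lambda(c-a_1)(b_2-a_2)$ with $\nu(Q')\geq (b_1-a_1)(b_2-a_2)/\lambda$ together with $\nu(Q)=p\,\nu(Q')$ yields
\[
\frac{b_1-a_1}{c-a_1}\leq \frac{\lambda^2}{p}.
\]
For all $k\geq 2$ one has $p\geq \lfloor k/2\rfloor/k \geq 1/3$, with the extremal case $p=1/3$ attained at $k=3$; the same bound with $1-p$ in place of $p$ controls the sibling rectangle and gives $\lambda^2/(1-p)\leq 2\lambda^2$. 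Hence both children have aspect ratio at most $3\lambda^2$, closing the induction.

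The diameter estimate then follows: each $Q_i$ has $\nu(Q_i)=1/n$ and $\rho\geq 1/\lambda$, so its Lebesgue area is at most $\lambda/n$; combined with an aspect ratio at most $3\lambda^2$ this forces the longer side to be at most $\sqrt{3\lambda^3/n}$, whence $\diam(Q_i)\leq \sqrt{6\lambda^3/n}=:C(\lambda)/\sqrt{n}$. The main subtlety compared with Lemma \ref{AspectRatioLemma} is precisely the handling of the uneven splits that arise when $k$ is odd (worst case $k=3$, a $1/3$--$2/3$ split); this is exactly why the bound $3\lambda^2$ appears in the aspect-ratio estimate rather than the $2\lambda^2$ valid for pure dyadic bisections.
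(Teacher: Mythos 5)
Your proof is correct, and it is precisely the ``slight modification to the construction preceding Lemma~\ref{AspectRatioLemma}'' that the paper alludes to without spelling out. The dyadic bisection is replaced by splits into $\lfloor k/2\rfloor$ and $\lceil k/2\rceil$ parts, and your analysis correctly identifies that the worst-case mass fraction is $1/3$ (attained at $k=3$), which replaces the $1/2$ of the pure dyadic scheme and turns the $2\lambda^2$ of Lemma~\ref{AspectRatioLemma} into the stated $3\lambda^2$; the diameter bound then follows exactly as you state.
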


The task now is to show that with high probability we can indeed find a matching between the points $X_1, \dots, X_n$ and the rectangles $Q_1, \dots, Q_n$, in such a way that every point is close to its matched rectangle. When $\rho \equiv 1$, the previous statement is directly related to the result of Leighton and Shor \cite{LeightonShor}. The proof of Leighton and Shor depends on discrepancy estimates over all regions $R$ formed by squares from a suitable regular grid  $G'$  defined on $D$. By discrepancy we mean the difference between $\nu(R) $ and $\nu_n(R)$ for a given region $R$. Obtaining a uniform bound on the discrepancy over all regions $R$ can be interpreted as obtaining probabilistic estimates on the supremum of a stochastic process indexed by the mentioned class of regions $R$. 
A conceptually clear and efficient proof of this matching result, based on obtaining upper bounds of stochastic processes,  was presented by Talagrand  \cite{TalagrandGenericChain, TalagrandNewBook}. In order to prove Theorem \ref{MatchingCubed=2} we follow the framework of Talagrand and start by stating a general result on obtaining bounds on the supremum of more general stochastic processes (Section 1 in \cite{TalagrandGenericChain}).

Let $(Y,d)$ be an arbitrary metric space. For $n \in \N $ define,
$$ e_n(Y,d)= \inf \sup_{y \in Y} d(y,Y_n),  $$
where the infimum is taken over all subsets $Y_n$ of $Y$ with cardinality less than $2^{2^n}$.  Let $\left\{ A_n \right\}_{n \in \N}$ be a sequence of partitions of $Y$. This sequence of partitions is called \emph{admissible} if it is increasing (in the sense that for every $n$, $A_{n+1}$ is a refinement of $A_n$) and it is such that the cardinality of $A_n$ is no bigger than $2^{2^n}$. For a given $y \in Y$ and  $\left\{ A_n \right\}_{n \in \N}$ admissible, $A_n(y)$ represents the unique set in $A_n$ containing $y$.  For an $\alpha >0$, consider
$$ \gamma_{\alpha}(Y,d)= \inf \sup_{y\in Y} \sum_{n \geq 0} 2^{n /\alpha} \diam(A_n(y)),  $$
where $\diam(A_n(y))$ represents the diameter of the set $A_n(y)$ using the distance function $d$ and where the infimum is taken over all $\left\{A_n \right\}_{n \in \N}$ admissible sequences of partitions of $Y$. With these definitions we can now state Theorem 1.2.9 in \cite{TalagrandGenericChain}. 
\begin{lemma}
Let $Y$ be a set and let $d_1,d_2$ be two distance functions defined on $Y$. Let $\left\{Z_y \right\}_{y \in Y}$ be a stochastic process satisfying: for all $y,y' \in Y$ and all $u>0$
\begin{equation}
\mathbb{P}\left( |Z_{y}-Z_{y'}| \geq u \right) \leq 2 \exp \left( - \min ( \frac{u^2}{d_2(y,y')^2} , \frac{u}{d_1(y,y')} )\right),
\label{TalagrandsCondition}
\end{equation}
and also $\mathbb{E}[Z_y]=0$ for all $y \in Y$. Then, there is a constant $L>0$ large enough, such that for all $u_1, u_2 >0$ 
\begin{equation}
\mathbb{P}( \sup_{y \in Y} |Z_{y}-Z_{y_0}| \geq L ( \gamma_1(Y,d_1)+ \gamma_2(Y,d_2)) + u_1 D_1 +u_2 D_2  ) \leq L \exp(-\min\left\{u_2^2,u_1 \right\}),
\label{TalagrandsTailEstimate}
\end{equation}
where $D_1= 2 \sum_{n \geq 0} e_n(Y,d_1)$ and $D_2=2 \sum_{n \geq 0} e_n(Y,d_2)$.
\label{TalagrandsTheorem}
\end{lemma}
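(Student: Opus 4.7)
The plan is to use Talagrand's generic chaining method, adapted to the mixed sub-Gaussian/sub-exponential tail in the hypothesis \eqref{TalagrandsCondition}. After a standard reduction to finite $Y$, I would fix near-optimal admissible sequences realizing $\gamma_1(Y,d_1)$ and $\gamma_2(Y,d_2)$ and merge them into a single admissible sequence $(A_n)_{n \geq 0}$ of $Y$ by taking common refinements, reindexing so that $|A_n| \leq 2^{2^n}$ still holds. For each $n$ and each $y \in Y$, pick a distinguished representative $\pi_n(y) \in A_n(y)$ with $\pi_0(y) \equiv y_0$. This sets up the telescoping identity
$$ Z_y - Z_{y_0} \;=\; \sum_{n \geq 1} \bigl( Z_{\pi_n(y)} - Z_{\pi_{n-1}(y)} \bigr), $$
so the task reduces to controlling each chain increment uniformly in $y$.

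For each level $n$ there are at most $|A_n| \cdot |A_{n-1}| \leq 2^{2^{n+1}}$ distinct pairs $(\pi_n(y), \pi_{n-1}(y))$. I would assign each such pair the threshold
$$ t_n(y) \;=\; K \bigl( 2^{n/2}\, d_2(\pi_n(y), \pi_{n-1}(y)) + 2^n\, d_1(\pi_n(y), \pi_{n-1}(y)) \bigr) + u_2\, e_n(Y,d_2) + u_1\, e_n(Y,d_1), $$
for a large absolute constant $K$. The hypothesis then gives a per-pair tail of the form $2\exp(-C \cdot 2^n)$ with $C$ chosen (by tuning $K$) to dominate $2 \ln 2 \cdot 2^{n+1}$, so the level-$n$ union bound is summable. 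Summing the thresholds and using the triangle inequality $d_\alpha(\pi_n(y), \pi_{n-1}(y)) \leq 2 \diam(A_{n-1}(y))$ yields $\sum_{n \geq 1} 2^{n/\alpha} \diam(A_{n-1}(y))$-type bounds, producing the $L(\gamma_1(Y,d_1) + \gamma_2(Y,d_2))$ contribution after taking the supremum in $y$ and reindexing. The $e_n$ allowances sum to $D_1$ and $D_2$, and the additional budget they require from the tail hypothesis produces precisely the $u_1 D_1 + u_2 D_2$ additive term together with the $L \exp(-\min(u_2^2, u_1))$ residual probability in \eqref{TalagrandsTailEstimate}.

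The main obstacle is the delicate allocation of the probability budget and threshold splitting across levels so that the mixed minimum in \eqref{TalagrandsCondition} is consistently exploited in both regimes. For each chain increment at each scale one must determine which of $u^2/d_2^2$ and $u/d_1$ is the binding constraint; this is precisely what forces the bound to split into two separate functionals $\gamma_1$ and $\gamma_2$ rather than collapsing to a single one, and it is also what allows a single chain to simultaneously absorb sub-Gaussian and sub-exponential behavior. A secondary technical point is the combination of the two initial admissible sequences: one must avoid doubling the exponent in $2^{2^n}$, which is handled by interleaving rather than naively taking products.
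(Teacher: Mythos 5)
The paper does not prove this lemma at all; it is stated verbatim as Theorem 1.2.9 of Talagrand \cite{TalagrandGenericChain} and invoked as a black box, so there is no in-paper argument to compare yours against. Evaluating your sketch on its own terms: the architecture (reduce to finite $Y$; merge admissible sequences via common refinements with an index shift so cardinality stays below $2^{2^n}$; telescope along chain points $\pi_n$; assign per-level thresholds of the form ``$\gamma$-budget plus $u$-budget''; close by a union bound) is the correct skeleton of a generic chaining argument and is indeed how Talagrand proceeds.

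There is, however, a genuine gap in the $u_1 D_1 + u_2 D_2$ part. You allocate, at level $n$, the deviation allowance $u_2\, e_n(Y,d_2) + u_1\, e_n(Y,d_1)$, which correctly sums to $\tfrac12(u_2 D_2 + u_1 D_1)$. But for the union bound to produce the factor $\exp(-\min(u_2^2, u_1))$, applying \eqref{TalagrandsCondition} to the level-$n$ increment with your threshold $t_n(y)$ and splitting off the $K$-part (which is eaten by the $2^{2^{n+1}}$ cardinality) leaves you needing
\begin{equation*}
\min\Bigl(\tfrac{u_2^2\, e_n(Y,d_2)^2}{d_2(\pi_n(y),\pi_{n-1}(y))^2},\ \tfrac{u_1\, e_n(Y,d_1)}{d_1(\pi_n(y),\pi_{n-1}(y))}\Bigr) \ \geq\ \min(u_2^2, u_1),
\end{equation*}
and for this to hold for arbitrary $u_1,u_2>0$ you must have $d_i(\pi_n(y),\pi_{n-1}(y)) \leq e_n(Y,d_i)$ for both $i$, uniformly in $y$. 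That pointwise bound is \emph{not} a consequence of taking admissible sequences near-optimal for $\gamma_1$ and $\gamma_2$ and merging them: those sequences control $\sup_y \sum_n 2^{n/\alpha}\diam_{d_\alpha}(A_n(y))$ but give no bound on $\diam_{d_i}(A_n(y))$ in terms of the entropy numbers $e_n(Y,d_i)$ (already $Y=\{0,1\}$ with the trivial admissible sequence has $\diam(A_0)=1$ while $e_0(Y,d)=0$). The missing ingredient is to further refine the merged admissible sequence by the cells induced by near-optimal $2^{2^n}$-nets for \emph{both} $d_1$ and $d_2$ (again shifting the index), which forces $\diam_{d_i}(A_n(y)) \lesssim e_{n-c}(Y,d_i)$ for a fixed shift $c$; only then does your budget allocation interface correctly with the tail hypothesis. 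This refinement-by-nets is precisely where $e_n$, and hence $D_1,D_2$, enter Talagrand's argument; as written your sketch supplies the right budget but not the geometric control needed to spend it.
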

One of the consequences of the previous lemma is the following: in order to prove a tail estimate of the supremum of the stochastic process $\left\{Z_y \right\}_{y \in Y}$, like the one in \eqref{TalagrandsTailEstimate}, one needs to do two things. First, estimate the quantities $\gamma_1(Y,d_1)$,  $\gamma_2(Y,d_2)$, $D_1$ and $D_2$. Note that these quantities depend only on the distances $d_1,d_2$ and hence are not a priori related to the process $\left\{ Z_y \right\}_{y \in Y}$. Secondly, relate the stochastic process $\left\{ Z_y \right\}_{y \in Y}$ with the distances $d_1,d_2$ by establishing condition \eqref{TalagrandsCondition}.

We are now ready to prove Theorem \ref{MatchingCubed=2}. As mentioned earlier, this result is an adaptation of the proof by Talagrand of Leighton and Shor theorem. We sketch some of the main steps in the proof by Talagrand and give the details on how to generalize it to non-constant densities.

\begin{proof}[Proof of theorem \ref{MatchingCubed=2} ] 
In what follows $L>0$ is a constant that may increase from line to line. 

\emph{Discrepancy estimates.} Let $l_1$ be the largest integer such that $ 2^{-l_1} \geq \frac{(\ln(n))^{3/4}}{\sqrt{n}}$. Consider $G$ to be the regular grid of mesh $2^{-l_1} $ given by
\begin{equation}
G=\left\{ (x_1, x_2) \in [0,1]^2 \: ; \; 2^{l_1}x_1 \in \N \: \te{ or } \: 2^{l_1}x_2 \in \N \right\}
\label{GridG}
\end{equation}
A \emph{vertex} of the grid $G$  is a point $(x_1,x_2)$ in $[0,1]^2$ such that $2^{l_1}x_1 \in \N$ and $2^{l_1}x_2 \in \N$. A \emph{square} of the grid $G$ is a square of side length equal to $2^{-l_1}$ and whose edges belong to $G$. The edges are included in the squares.  

For a given vertex $w$ of $G$ and a given integer $k$, consider $\mathcal{C}(w,k)$ the set of simple closed curves that lie on $G$ which contain the vertex $w$ and have length $l(C) \leq 2^k$. Note that every closed simple curve $C$ in $\R^2$ divides the space into two regions, one of which is bounded; this later one is called the interior of the curve $C$ and is denoted by  $C^{\circ}$. For $C, C' \in \mathcal{C}(w,k)$ set $d_1(C,C')=1$ if $C \not = C'$ and $d_1(C,C')=0$ if $C=C'$. Also set  $  d_2(C,C')= \sqrt{n} \| \chi_{C^{\circ}} - \chi_{C'^\circ}  \|_{L^2(D)}$. 

\emph{Claim 1:} For a given $w$ of $G$ and a given integer $ k$ with $k \leq l_1 +2$, there exists $L>0$ large enough such that with probability at least $1- L \exp(-\ln(n)^{3/2}  /L  )$
\begin{equation}
\sup_{C \in \mathcal{C}(w,k)} |  \sum_{i \leq n}\left( \chi_{C^\circ}(X_i) -  \nu(C^\circ) \right) | \leq L2^k \sqrt{n}(\ln(n))^{3/4}.
\end{equation}

To prove the claim, the idea is to study the supremum of the stochastic process $\left\{ Z_C \right\}_{C \in \mathcal{C}(w,k) }$ where
$$ Z_C := \frac{1}{L} \sum_{i \leq n}   \left( \chi_{C^\circ}(X_i) -  \nu(C^\circ) \right). $$
For fixed  $C, C' \in \mathcal{C}(w, k)$ one can write the difference $Z_C -Z_{C'}$ as
$$  Z_C - Z_{C'}=  \sum_{i \leq n} Z_i,  $$
where $ Z_i = \frac{1}{L} \left(\chi_{C^\circ}(X_i) - \chi_{C'^\circ}(X_i)   - \nu(C^\circ) + \nu(C'^\circ) \right) $. The random variables $\left\{ Z_i\right\}_{i \leq n}$  are independent and identically distributed with mean zero, they satisfy $|Z_i| \leq \frac{2}{L}$ and furthermore, their variance $\sigma^2$ is bounded by
$$  \sigma^2 \leq \frac{1}{L^2} \mathbb{E}\left[ |\chi_{C^\circ}(X_i) - \chi_{C'^\circ}(X_i)|^2 \right]  \leq \frac{\lambda}{L^2} \| \chi_{C^\circ}-\chi_{C'^\circ} \|_{L^2(D)}^2. $$
Using Bernstein's inequality and choosing $L>0$ to be large enough, we obtain
\begin{equation*}
\mathbb{P}\left( \left| Z_C - Z_{C'} \right| \geq u \right) \leq 2 \exp \left(  - \frac{u^2}{   n \| \chi_{C^\circ}-\chi_{C'^\circ} \|_{L^2(D)}^2   + u}    \right) = 2 \exp \left( - \min ( \frac{u^2}{d_2(C,C')^2} , \frac{u}{d_1(C,C')} ) \right).
\end{equation*}
In the proof of proposition 3.4.3 in Talagrand, the estimates  $\gamma_1(\mathcal{C}(w,k), d_1 ) \leq L 2^{k}\sqrt{n}$ , $\gamma_2(\mathcal{C}(w , k),d_2)  \leq  L 2^k \sqrt{n}  (\ln(n))^{3/4} $, $D_1 \leq 2(k+l_1 +1)$ and $D_2\leq  L 2^{k+1}\sqrt{n}$ are established. Setting $u_1= (\ln(n))^{3/2}$ and $u_2= (\ln(n))^{3/4}$ one can use Lemma \ref{TalagrandsTheorem} ( with $Y= \mathcal{C}(w, k)$, $d_1$, $d_2$ as above and $y_0=\left\{w \right\}$ ) to prove the claim.

Considering all possible vertices $w$ of $G$ and all possible integers $k$ with $-l_1 \leq k \leq l_1 +2$. It is a direct consequence of Claim 1 above that with probability at least $1- L \exp(-(\ln(n)^{3/2}  )/L  )$, 
\begin{equation}
\sup_{C } |  \sum_{i \leq n}\left( \chi_{C^\circ}(X_i) -  \nu(C^\circ) \right) | \leq Ll(C) \sqrt{n}(\ln(n))^{3/4},
\label{eventDiscrepancy}
\end{equation}
where the supremum is taken over all $C$ closed, simple curves on $G$. See the proof of Theorem 3.4.2 in \cite{TalagrandGenericChain}.  We denote by $\Omega_n$ the event for which \eqref{eventDiscrepancy} holds.

\emph{Enlarging Regions. }Consider an integer $l_2$ with $l_2 < l_1$.   We consider $G'$ the grid defined as in \eqref{GridG} but with mesh size $2^{-l_2}$. Note that in particular $G' \subseteq G$. Let $R$ be a union of squares of the grid $G'$. One can define $R'$ to be the region formed by taking the union of all the squares in $G'$ with at least one side contained in $R$.  With no change in the proof of Theorem 3.4.1 in \cite{TalagrandGenericChain}, it follows from the discrepancy estimates obtained previously that in the event $\Omega_n$ one has
\begin{equation}
 \nu(R') \geq  \nu_n(R)
\label{IneqRandR'}
\end{equation}
for all regions $R$ formed with squares from $G'$, provided that $2^{-l_2} \geq \frac{2^6 L}{\sqrt{n}}(\ln(n) )^{3/4}$. 

What this is saying is that given the discrepancy estimates obtained previously, in the event $\Omega_n$, for any region $R$ formed by taking the union of squares in $G'$, one can enlarge $R$ a bit to obtain a region $R'$ in such a way that the area of the enlarged region $R'$ according to $\nu$ is greater than the area of the original region $R$ according to $\nu_n$. It is worth remarking that the restriction to the number $2^{-l_2}$ (the mesh size of $G'$), for this to be possible, coincides with the scaling for the transportation cost we are after. 

\emph{Matching between rectangles and data points.} We choose $l_2$ to be the largest integer satisfying $2^{-l_2} \geq \frac{2^6 L}{\sqrt{n}}(\ln(n) )^{3/4}$. Consider $\left\{Q_1, \dots, Q_n \right\}$ the rectangles constructed from Lemma \ref{reg_part}. For $i \in \left\{1, \dots, n \right\}$ let  $B_i= \left\{  j \leq n \: : \:  \dist(X_i ,Q_j ) \leq 2 \sqrt{2} \cdot 2^{-l_2}  \right\}$.

\emph{Claim 2:} In the event $\Omega_n$,  there is a bijection $\pi : \left\{1, \dots, n \right\} \rightarrow \left\{1 , \dots, n \right\}$ with $\pi(i) \in B_i$ for all $i$. 

By the Hall marriage lemma, to prove this claim it is enough to prove that for every $I \subseteq \left\{X_1, \dots, X_n \right\}$, the cardinality of $\cup_{i \in I} B_i$ is greater than the cardinality of $I$.  Fix $I \subseteq \left\{1, \dots, n \right\}$ and denote by $R_I$ the region formed with the squares of $G'$ that contain at least one of the points $X_i$ with $i \in I$. Now, take $J=\left\{ j \leq n \: : \: Q_j \cap (R_I)' \not = \emptyset \right\}$, then, $ J \subseteq \cup_{i \in I} B_i $. From the properties of the boxes $Q_i$ and from \eqref{IneqRandR'} it follows that  $\# \cup_{i \in I} B_i \geq \# J =n \nu(\cup_{j \in J} Q_j ) \geq n \nu((R_I)') \geq \# I $. This proves the claim. 

Finally, we construct a transportation map $T_n$ between $\nu$ and $\nu_n$. Indeed, for $x$ in $Q_i $, set $T_n(x)= X_{\pi^{-1}(i)}$. From the properties of the boxes $Q_i$, it is straightforward to check that $T_{n\sharp}\nu= \nu_n$ and that  $\|T_n - Id\|_{L^\infty(D) } \leq L \frac{(\ln(n))^{3/4}}{\sqrt{n}}$ due to the estimate on the diameter of the rectangles $Q_i$ in \eqref{DiameterQi}. 
\end{proof}
\section{The matching results for general $D$.}
\label{SecMatchingD}
The goal of this section is to prove the optimal bounds on matching for all open, connected, bounded domains $D$ with Lipschitz boundary.  In order to achieve this, we first prove Theorem \ref{PropositionLinfinityWinfinityGeneralDomain} for general domains $D$. It is useful to consider first a class of domains $D$ which are \emph{well partitioned}.
\begin{definition}
Let $D \subseteq \R^d$. We say that $D$ satisfies the (WP) property with $k$ polytopes if $D$ is an open, bounded and connected set  and is such that there exists a finite family of closed convex polytopes $\left\{A_i \right\}_{i=1}^{k}$ covering $D$ and satisfying: For all $i,j = 1, \dots, k$
\begin{enumerate}
\item  $\inte(A_i)  \cap D  \not = \emptyset$
\item If $i \not = j$ then  $\inte(A_i) \cap \inte(A_j) = \emptyset$.
\item $A_i \cap \overline{D}$ is bi-Lipschitz homeomorphic to a closed cube.  
\end{enumerate}
\label{DefinitionHProperty}
\label{WP}
\end{definition}

The class of domains satisfying the (WP) property is convenient for our purposes for two reasons. The first one because as we see below, in order to prove the matching results for sets with the (WP) property, we can use induction on the number of polytopes. The second reason, has to do with the fact that the class of sets which are well partitioned contains the class of open, bounded, connected domains with smooth boundary. This is the content of the next proposition whose proof is presented in the Appendix.

\begin{proposition}
Let $D\subseteq \R^d$ be an open, bounded and connected domain with smooth boundary. Then, $D$ satisfies the (WP) property with $k$ polytopes for some $k \in \N$.
\label{HPropertySmoothDomains}
\end{proposition}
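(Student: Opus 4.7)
The plan is to build the partition from a Voronoi tessellation whose seed points are adapted to $\partial D$. Since $\partial D$ is smooth and compact, there exists $r_0>0$ such that the tubular neighborhood $N_{r_0}:=\{x\in\overline D : \dist(x,\partial D)<r_0\}$ is parametrized by the inward normal map $\Phi(y,t):=y-t\mathbf n(y)$ on $\partial D\times[0,r_0)$, and $\Phi$ is a bi-Lipschitz homeomorphism onto its image with constants depending only on $D$.

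Fix $\delta\in(0,r_0)$ to be chosen later. I choose two families of seed points: the \emph{boundary seeds} $z_j:=y_j-\tfrac{r_0}{2}\mathbf n(y_j)$, where $\{y_j\}_{j=1}^m\subset\partial D$ is a maximal $\delta$-separated subset; and the \emph{interior seeds}, the vertices of a cubical lattice of mesh $\delta$ intersected with the set $\{x\in D : \dist(x,\partial D)\geq r_0\}$. Take the Voronoi cells of the combined seed set in $\R^d$, and truncate them by a large fixed cube containing $\overline D$, producing a finite family $\{A_i\}_{i=1}^k$ of bounded closed convex polytopes with pairwise disjoint interiors whose union covers $\overline D$. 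Retaining only the $A_i$ with $\inte(A_i)\cap D\neq\emptyset$ gives conditions (1) and (2) of Definition \ref{WP}.

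For condition (3), interior cells (those whose seed is an interior seed) lie entirely inside $D$ and are bounded convex polytopes with non-empty interior, hence bi-Lipschitz equivalent to $[0,1]^d$ by an elementary radial-stretching argument from an interior point. For a boundary cell $A_i$ with seed $z_j$, I would show that for $\delta$ small enough $A_i\subset\overline{N_{r_0}}$, and that $\Phi^{-1}(A_i\cap\overline D)=\{(y,t):y\in P_i,\,0\leq t\leq h_i(y)\}$, where $P_i\subset\partial D$ is bi-Lipschitz equivalent to $[0,1]^{d-1}$ (being essentially a geodesic Voronoi cell of $y_j$ on $\partial D$) and $h_i:P_i\to(0,r_0)$ is Lipschitz and bounded below by a positive constant. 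Straightening the height via $(y,t)\mapsto(y,t/h_i(y))$ and composing with $\Phi$ and any bi-Lipschitz identification $P_i\to[0,1]^{d-1}$ then yields the required bi-Lipschitz homeomorphism $A_i\cap\overline D\to[0,1]^d$.

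The main obstacle is the geometric claim that each boundary cell has this prism structure. The critical quantitative step is that the bisecting hyperplane between two nearby boundary seeds $z_j,z_{j'}$ is nearly parallel to $\mathbf n(y_j)$ near $\partial D$; this follows from the vector $z_j-z_{j'}$ being nearly tangential to $\partial D$ once $\delta\ll r_0$. Together with the buffer that keeps interior seeds at distance at least $r_0$ from $\partial D$, this ensures that $\partial D$ crosses each boundary cell as a single graph over $P_i$ and that no interior cell reaches $\partial D$. All the smallness requirements on $\delta$ are determined by $r_0$ and the $C^2$ norm of a parametrization of $\partial D$, and hence can be met simultaneously.
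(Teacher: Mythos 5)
Your construction shares the paper's core idea of a Voronoi tessellation adapted to $\partial D$, but diverges in the execution. The paper places its seeds \emph{on} $\partial D$ itself, taking a finite cover by balls of radius $\veps^2$, and then intersects each Voronoi cell $V_i$ with a strip $S_i$ of half-width $\veps$ centered at the tangent plane at the seed. The mismatch between the two scales ($\veps^2$ for seed spacing versus $\veps$ for strip width) is deliberate: it yields the quantitative estimate (Claim~1) that each $A_i=V_i\cap S_i$ lies inside a cylinder of radius $4\veps^{3/2}$ around the normal line at the seed, which in turn forces $|\vec n_z-\vec n_i|\lesssim\veps^{3/2}$ on $\partial D\cap A_i$. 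That control on normal variation is what makes the paper's bi-Lipschitz map work: they pick $\tilde x_i$ inside $D\cap A_i$ and use a \emph{radial stretching} $x\mapsto\tilde x_i+\tfrac{t_{\vec n}}{s_{\vec n}}(x-\tilde x_i)$, which is well defined because $A_i$ is convex, and Lipschitz because the first-hit function $\vec n\mapsto s_{\vec n}$ to $\partial D$ is Lipschitz once normal variation is small. Finally the pieces of $V_i$ lying in $D$ but outside $S_i$ are simply convex polytopes in $D$, which are automatically bi-Lipschitz to the cube --- so only one seed family is needed.

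Your route instead offsets the boundary seeds inward, introduces a second family of interior lattice seeds to bound the boundary cells in depth, and proposes a prism decomposition in normal coordinates $(y,t)\mapsto y-t\vec n(y)$. The key quantitative observation you single out --- that $z_j-z_{j'}$ is nearly tangential --- is correct (the normal component is $O(\delta^2)$ against a tangential component of size $\Theta(\delta)$, since $|\vec n(y_j)-\vec n(y_{j'})|$ is itself essentially tangential), and for $\delta$ small the bisectors between boundary seeds are indeed close to parallel to the normal. However, the step you pass over quickly is exactly the one where the argument is thinnest: you assert that the prism base $P_i=A_i\cap\partial D$ is bi-Lipschitz to $[0,1]^{d-1}$ ``being essentially a geodesic Voronoi cell.'' That is not a proof --- geodesic Voronoi cells on a curved hypersurface are not a priori bi-Lipschitz to cubes, and even after projecting to the tangent hyperplane at $y_j$ the image of $P_i$ is only \emph{approximately} convex because the bisecting hyperplanes meet $\partial D$ along curved sets. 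You would need to show, for instance, that this projection is star-shaped with respect to $y_j$ with a Lipschitz radial function, using the same kind of normal-variation bound the paper establishes via Claim~1. The paper's radial-stretching construction is precisely designed to avoid ever having to flatten a piece of $\partial D$ into a separate $(d-1)$-dimensional cube: it handles the entire $d$-dimensional cell $A_i\cap\overline D$ in one stroke. If you want to keep the prism structure, that missing step is the crux and must be filled in with an explicit quantitative argument; otherwise, switching to a radial stretching from $z_j$ (which lies in $\inte(A_i)\cap D$) would let you reuse the convexity of $A_i$ directly and sidestep the issue.

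A couple of smaller points worth noting. First, your boundary cells do stay inside the tubular neighborhood (the bisectors with the interior seeds sit at depth about $3r_0/4+O(\delta)$, which is less than $r_0$ for $\delta$ small), so that part is fine, but it deserves to be said explicitly since $\Phi$ is only defined up to depth $r_0$. Second, for the interior lattice seeds to actually be $\delta$-dense in $\{x\in D:\dist(x,\partial D)\geq r_0\}$ you should first fix $r_0$ small enough that this set is nonempty with smooth boundary (which holds once $r_0$ is below the reach of $\partial D$), and only then take $\delta$ small relative to $r_0$; otherwise a thin region of $D$ could fail to contain any interior seed, leaving a boundary cell unbounded in depth.
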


We now prove a lemma that prepares the ground for an inductive argument to be used in the proof of the matching results for domains with the (WP) property.

\begin{lemma}
Suppose that $D$ is a domain which satisfies hypothesis (WP) with $k$ polytopes ($k>1$). Let $\left\{ A_i\right\}_{i=1}^{k}$ be associated polytopes. Then there exists $j$ such that $D' := D \setminus A_j $ is connected. 
\label{LemmaRemoveRectangle}
\end{lemma}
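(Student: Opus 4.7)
The plan is to use a spanning tree argument on an adjacency graph of the polytopes, and then promote the resulting graph-level connectivity to topological connectivity of $D\setminus A_j$ using the convexity of $A_j$.

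First, I would introduce the adjacency graph $G$ on vertex set $\{A_1,\dots,A_k\}$, with an edge between $A_i$ and $A_m$ iff $A_i\cap A_m\cap D\neq\emptyset$. A preliminary observation is that each $A_i\cap D$ is connected: under the bi-Lipschitz homeomorphism $A_i\cap\overline{D}\cong[0,1]^d$, the set $A_i\cap D$ corresponds to $[0,1]^d$ minus a subset of its boundary, which is connected since it contains the dense connected subset $(0,1)^d$. Using this, $G$ is connected: otherwise the vertices split into classes $I_1,I_2$ with no $G$-edges across, and $\bigcup_{i\in I_1}(A_i\cap D)$ and $\bigcup_{i\in I_2}(A_i\cap D)$ would be disjoint nonempty relatively closed subsets of $D$ covering $D$, contradicting connectedness.

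Next, pick a spanning tree $T$ of $G$ and take $A_j$ to be a leaf of $T$, which exists since $k\geq 2$. Then $T\setminus\{A_j\}$ spans $G\setminus\{A_j\}$, which is therefore still connected. Combining this with the connectedness of each $A_i\cap D$ and the fact that $G$-adjacent pieces share a point in $D$ gives that $\bigcup_{i\neq j}(A_i\cap D)$ is connected. I would then verify that this union equals $D\setminus\inte(A_j)$: the nontrivial inclusion uses that for $x\in\partial A_j\cap D$, the open neighborhood of $x$ in $D$ contains points outside $A_j$, so by finiteness and closedness of the polytopes $x$ must lie in some $A_m$ with $m\neq j$. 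Hence $D\setminus\inte(A_j)$ is connected.

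The main obstacle is the final step: deducing that $D\setminus A_j$ itself is connected. I would argue by contradiction: suppose $D\setminus A_j=U_1\sqcup U_2$ is a separation into disjoint nonempty open sets. For each $p\in\partial A_j\cap D$, pick a ball $B(p,\varepsilon_p)\subseteq D$ small enough that $A_j\cap B(p,\varepsilon_p)$ equals the intersection of $B(p,\varepsilon_p)$ with the tangent cone of $A_j$ at $p$. Since $A_j$ is a convex polytope and $p\in\partial A_j$, this tangent cone is a proper convex cone, and a standard argument (for $d\geq 2$; the case $d=1$ is immediate) shows that $B(p,\varepsilon_p)\setminus A_j$ is then a nonempty connected open set. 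As a connected subset of $U_1\cup U_2$, it lies entirely in one $U_{i(p)}$. Setting $S_i=\{p\in\partial A_j\cap D:i(p)=i\}$ and $V_i:=U_i\cup S_i$, the pair $(V_1,V_2)$ is disjoint, covers $D\setminus\inte(A_j)$, and is open in $D\setminus\inte(A_j)$ (the ball $B(p,\varepsilon_p)$ certifies openness at a boundary point $p$, since nearby points of $\partial A_j\cap D$ also receive the label $i(p)$ by the same local argument applied at those points). This gives a separation of $D\setminus\inte(A_j)$, contradicting the previous paragraph and completing the proof.
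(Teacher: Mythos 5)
Your proof is correct, and it follows a genuinely different route from the paper's. The paper declares $A_i \sim A_m$ only when their boundaries meet \emph{inside $D$ in the relative interior of a common facet}, and proves graph connectivity by routing a path inside $D$ that avoids the union of all ridges (the $(d-2)$-skeleton); the payoff of this stronger adjacency is that near any such facet-interior point a small ball lies in $A_i \cup A_m$ and misses $A_j$ entirely, so deleting a leaf keeps the surviving polytopes in direct contact and the paper simply asserts the final connectivity as ``straightforward.'' You instead use the weaker adjacency $A_i \cap A_m \cap D \neq \emptyset$, which makes graph connectivity elementary (your relatively-closed cover argument), and also puts property (3) of Definition~\ref{WP} to explicit use to show each $A_i\cap D$ is connected — an ingredient the paper never invokes in this proof. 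The cost is that the spanning-tree step only yields connectivity of $\bigcup_{i\neq j}(A_i\cap D)=D\setminus\inte(A_j)$, and the real work in your route is the tangent-cone argument upgrading this to connectivity of $D\setminus A_j$; that argument is sound (for $d\geq 2$, $B(p,\veps_p)\setminus A_j$ is a nonempty connected cone-complement near $p\in\partial A_j$, and the label $i(p)$ is locally constant, so a separation of $D\setminus A_j$ would propagate to a separation of $D\setminus\inte(A_j)$). In effect the paper front-loads the geometric subtlety into the adjacency relation, while you relocate it to an explicit local analysis along $\partial A_j$; yours has the advantage of making the final step, which the paper leaves to the reader, fully explicit.
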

\begin{proof}
We say that $A_l \sim A_m$  if  $\relint (\partial A_m) \cap \relint(\partial A_l) \cap D \not = \emptyset$,  where $\relint(\partial A_i)$ is the union of the relative interiors of the facets of $A_i$ ( $(d-1)$-dimensional faces).  
This relation induces a graph $G=(V,E)$ where the set of nodes $V$ is the set of polytopes $A_i$ and where an edge between $A_m$ and $A_l$ ($m \not = l$) belongs to the graph if and only if $A_m \sim A_l$.  We claim that $G$ is a connected graph.

Indeed, consider $m\not = l$. We want to show that there exists a path in the graph $G$ connecting $A_m$ with $A_l$.  For this purpose consider $x \in \inte(A_m) \cap D$ and  $y \in \inte(A_l) \cap D$. Denote by $C$ the union of all the ridges ($(d-2)$-dimensional faces) of all the polytopes $A_i$.  Given that $C$ is the union of finitely many $(d-2)$-dimensional objects in $\R^d$, we conclude that $D \setminus C$ is a connected open set and as such it is path connected. Since $x \in \inte(A_m) \cap D$  and $y \in \inte(A_l) \cap D$, in particular $x,y \in D\setminus C$ and so there exists a continuous function $\gamma: [0,1] \rightarrow D \setminus C$ such that $\gamma(0)=x$ and $\gamma(1)=y$. Let $A_{i_0} , A_{i_1} , \dots, A_{i_N}$  be the polytopes visited by the path $\gamma$ in order of appearance; this list satisfies $A_{i_s} \not = A_{i_{s+1}}$ for all $s$, $A_{i_0}= A_m $ and $A_{i_N}= A_l $. Now, note that for any given $s$, the path $\gamma$ intersects $\partial A_{i_s} \cap \partial A_{i_{s+1}} $ at a point which belongs to the relative interior of a facet ($d-1$ dimensional face) of $A_{i_s}$ and of $A_{i_{s+1}}$; this because  $\gamma $ lies in $D \setminus C$. From this fact we conclude that  $A_{i_s} \sim A_{i_{s+1}}$ and hence there is a path in $G$ connecting $A_m$ and $A_l$. This proves that $G$ is connected.

From the fact that $G$ is connected, we deduce that it has a spanning tree $G'$. That is, there exists a subgraph $G'$ of $G$ which is a tree and includes all of the vertices of $G$. Let $A_j$ be a leave of the spanning tree $G'$. It is now straightforward to show that $A_j$ is the desired polytope from the statement.
\end{proof}

\begin{remark}
Consider $D$ and $A_j$ as in the statement of Lemma \ref{LemmaRemoveRectangle}. Then $D' := D \setminus A_j$ satisfies the property (WP) with $(k-1)$ polytopes and  $D'':= D \cap A_j$ satisfies the property (WP) with one polytope.
\label{RemarkInduction}
\end{remark}

Let $A_j$ be the polytope as in statement of Lemma \ref{LemmaRemoveRectangle}. Note that there exists $i \not = j$ such that $\relint( \partial A_i) \cap \relint(\partial A_j) \cap D \not = \emptyset$; we denote this polytope by $\tilde{A}_j$. Let $\tilde{x} \in \relint(\partial \tilde{A}_j) \cap \relint(\partial A_j) \cap D$. Note that necessarily $F:= \relint(\partial \tilde{A}_j) \cap  \relint(\partial A_j) $ is contained in a hyperplane and hence we can consider $e$ a unit vector which is orthogonal to $F$. Take $r>0$ such that $B(\tilde{x},r) \subseteq  \inte(( \tilde{A}_j \cup A_j) \cap D )$. Let  $z_{1} := \tilde{x}+ r e$ and let $z_{-1}:= \tilde{x}-re$. Without the loss of generality we can assume that $z_1 \in \inte(\tilde{A}_j)$. Denote by $C_1$ the set of points of the form $t z_1 + (1-t) y $ where $t \in [0,1] $ and $y \in B(\tilde{x},r)\cap F$, similarly, denote by $C_{-1}$ the set of points of the form $tz_{-1} +(1-t)y$ where $t \in [0,1]$ and where  $y \in B(\tilde{x},r)\cap F$. Let $z_{-1/2}:= \tilde{x}-\frac{r}{2}e$ and consider the set $C_{-1/2}$ defined analogously to the way $C_1$ and $C_{-1}$ are defined. We can think of $C_1$ and $C_{-1}$ as \emph{gates} connecting the sets $D'= D \setminus A_j$ and $D''= D \cap A_j$. We illustrate the  construction on Figures \ref{fig:peanut3D} and \ref{fig:Zoom}.  
\begin{figure}
\begin{minipage}[t]{7cm}
\resizebox{7cm}{!}{\includegraphics{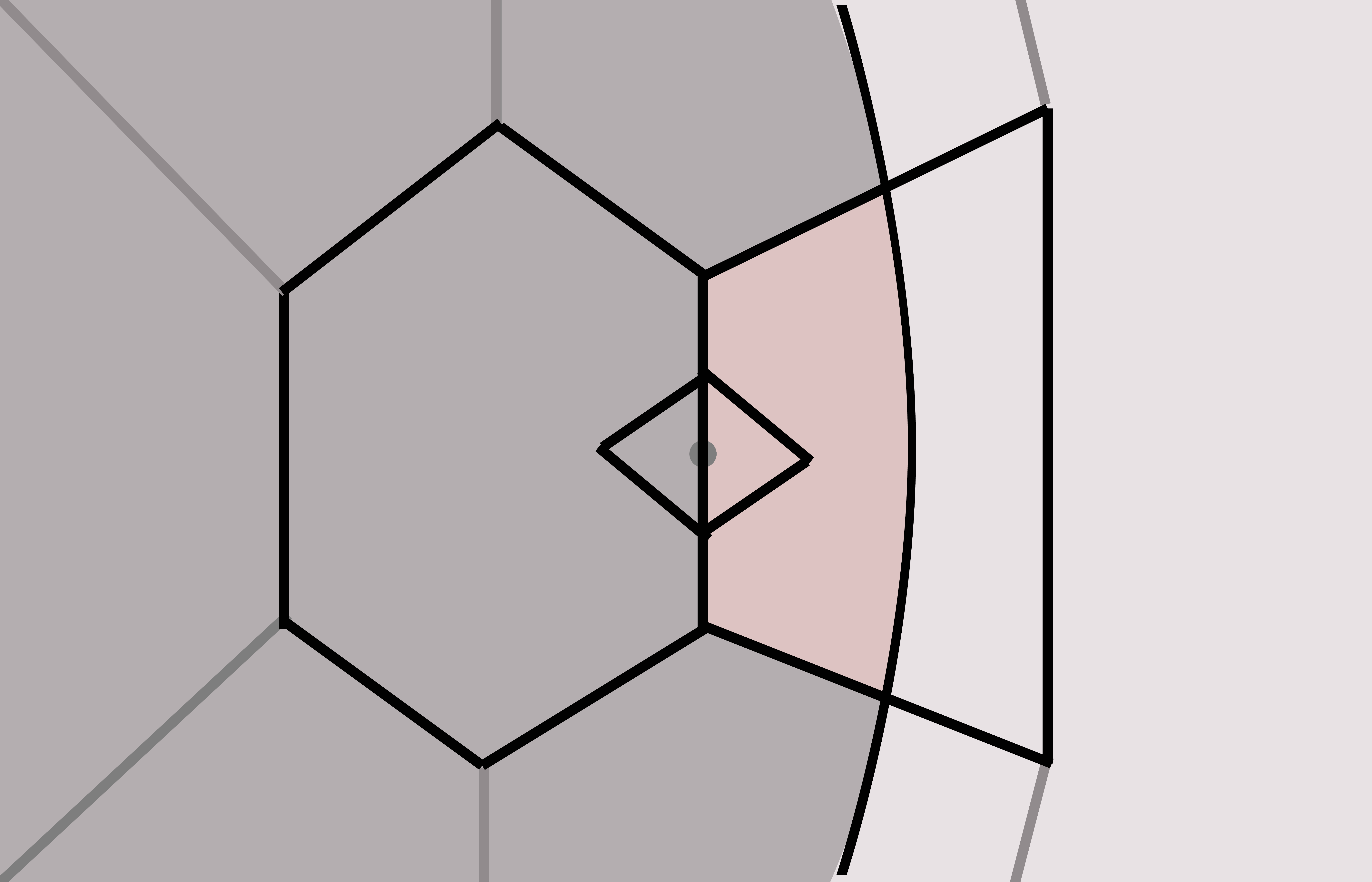}}
\put(-95,4){\LARGE $D$}
\put(-88,78){\large $D''$}
\put(-95,60){\small $\tilde{x}$}
\put(-140,65){\Large $\tilde A_j$}
\put(-64,78){\Large ${A}_j$}
\caption{Polytope $A_j$ with neighbor $\tilde{A}_j$.}
\label{fig:peanut3D}
\end{minipage}
\hspace*{8pt} 
\begin{minipage}[t]{7cm}
\resizebox{7cm}{!}{\includegraphics{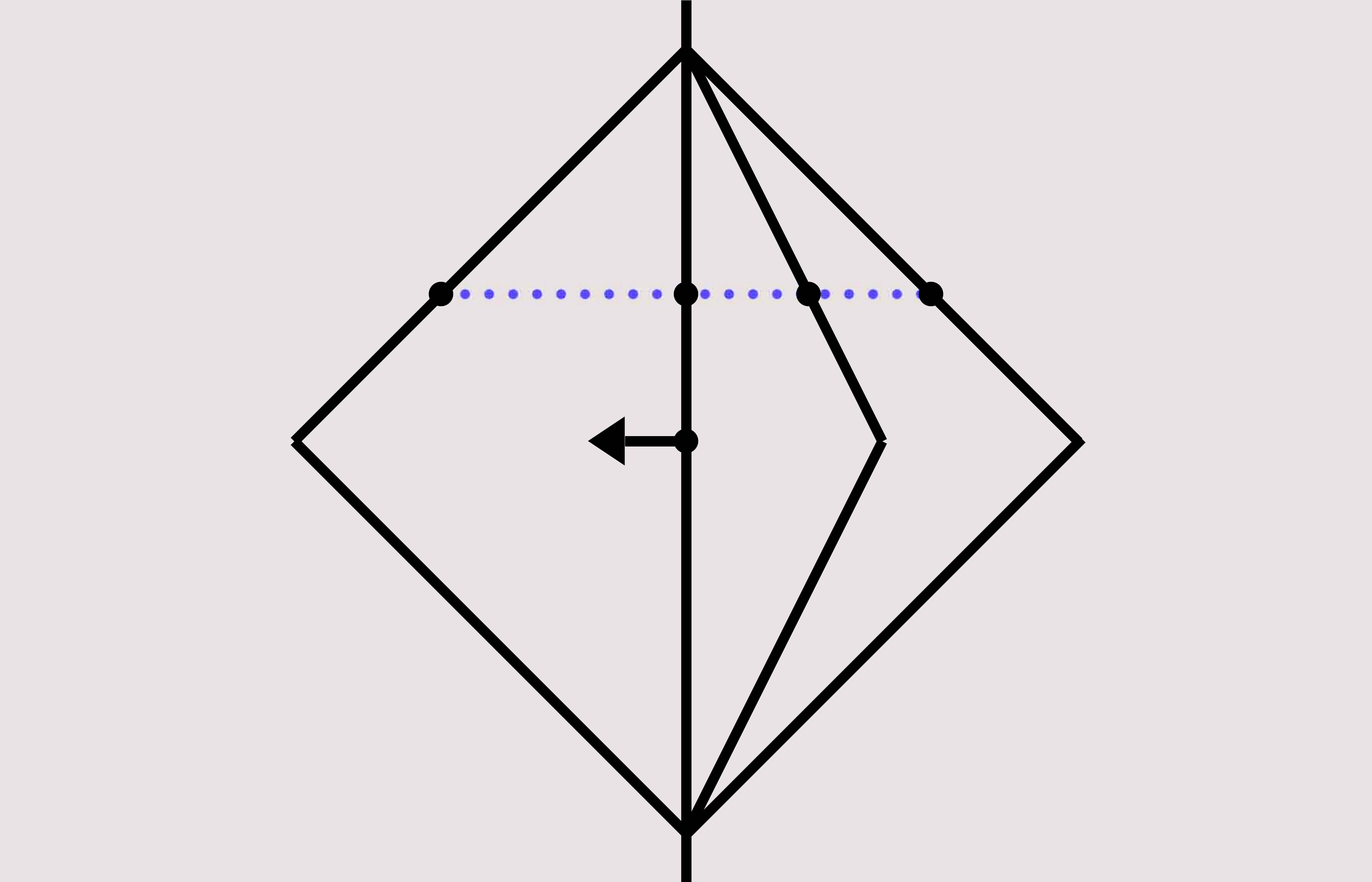}}
\put(-127,49){\Large $C_1$}
\put(-98,49){\large$C_{\te{-} \frac12}$}
\put(-75,49){\Large $C_{\te{-}1}$}
\put(-97,62){\large $\tilde{x}$}
\put(-167,62){\large $z_1$}
\put(-40,62){\large $z_{\te{-}1}$}
\put(-108,70){$e$}
\put(-140,91){$y_1$}
\put(-106,91){$y$}
\put(-63,91){$y_{-1}$}
\put(-156,64){\circle*{3}}
\put(-43,64){\circle*{3}}
\caption{Gate, enlarged.}
\label{fig:Zoom}
\end{minipage}
\end{figure}

We claim that there is a function $\psi : D'' \cup C_1 \rightarrow D'' $ which is a bi-Lipschitz homeomorphism. In fact, for a given point $y \in F \cap B(\tilde x,r)$ consider the line with direction $e$ passing trough the point $y$. This line intersects $\partial C_1$, at the points $y$ and $y_1$, it intersects $\partial C_{-1}$ at the points $y$ and $y_{-1}$ and finally it intersects $\partial C_{-1/2}$ at the points $y$ and $y_{-1/2}$. We set $\psi(y_{1}):= y $, $\psi (y) := y_{-1/2}$ and $\psi(y_{-1}):=y_{-1}$. On the segments  $[y_{-1},y]$, $[y,y_1]$ we define $\psi$ to be continuous and piecewise linear.  In this way we  define $\psi$ for all points in $C_{1} \cup C_{-1}$. Finally, set $\psi$ to be the identity on $D'' \setminus C_{-1}$. It is straightforward to check that $\psi$ constructed in this way is a bi-Lipschitz homeomorphism.

Now we are ready to prove Theorem \ref{PropositionLinfinityWinfinityGeneralDomain} for general domains.

\begin{proof}[Proof of Theorem \ref{PropositionLinfinityWinfinityGeneralDomain}]
\emph{Step 1:} Instead of proving the result for domains as in the statement, we first prove the result for domains $D$ satisfying the (WP) property. The proof is by induction on the number of polytopes $k$.

We remark that the constant $D(d, \lambda)$ may change (increase) from line to line in the proof.
\emph{Base case.} Suppose $k=1$. In this case there exists $\psi: \overline{D} \rightarrow [0,1]^d$ a bi-Lipschitz homeomorphism between $\overline{D}$ and the unit box. We use the map $\psi$ to obtain measures $\tilde{ \nu}_1, \tilde{\nu}_2 $  on $(0,1)^d$ by setting
$$  \tilde{\nu}_i:= \psi_{\sharp}\nu_i    \quad \te{ for } i=1,2. $$
Using the fact that $\psi$ is bi-Lipschitz, we can use the change of variables formula to deduce that $\tilde{\nu_1}$ and $\tilde{\nu}_2$ are absolutely continuous with respect to the Lebesgue measure with densities
$$ \tilde{\rho}_i(y) = \rho_i(\psi^{-1}(y)) | \det( J \psi^{-1}(y) )|    \quad \te{ for } i=1,2.  $$
 Here, $J\psi^{-1}$ represents the Jacobian matrix of $\psi^{-1}$.
 
 Using the fact that $\psi$ is bi-Lipschitz, we deduce that
$$   \frac{1}{\tilde{\lambda}} \leq \tilde{\rho}_1, \tilde{\rho}_2 \leq \tilde{\lambda}    $$
where $\tilde{\lambda} = \max\{\Lip(\psi)^d,  \Lip(\psi^{-1})^d \}$. By Theorem \ref{PropositionLinfinityWinfinityGeneralDomain} applied to the unit cube,
\begin{equation*}
 d_\infty(\tilde{\nu}_1, \tilde{\nu}_2) \leq C(\tilde{\lambda},d) \| \tilde{\rho}_1- \tilde{\rho}_2\|_{L^\infty((0,1)^d)}. 
 \label{AuxLInfinityGeneralDomain}
\end{equation*}
Consequently,
\begin{equation*}
 d_\infty(\nu_1, \nu_2) \leq \Lip(\psi^{-1}) d_\infty(\tilde{\nu}_1, \tilde{\nu}_2) \leq C \| \tilde{\rho}_1- \tilde{\rho}_2\|_{L^\infty((0,1)^d)} \leq C \| \rho_1- \rho_2\|_{L^\infty(D)}. 
 \label{AuxLInfinityGeneralDomain}
\end{equation*}
for some constant $C$ depending on $\lambda$ and $D$ only. 

\emph{Inductive Step.} Suppose that for any domain in $\R^d$ satisfying the (WP) property with $(k-1)$ polytopes the proposition is true. Let $D$ be a domain satisfying the (WP) property with $k$ polytopes and let $\rho_1, \rho_2: D \rightarrow (0,\infty)$ be functions as in the statement. By relabeling the functions if necessary, we can assume without the loss of generality that  $\int_{D'} \rho_1(x) dx - \int_{D'}\rho_2(x) dx \geq 0 $, where $D'$ is as in Remark \ref{RemarkInduction}.  Since there is more mass in $D'$ according to $\nu_1$ than according to $\nu_2$, we decide to transfer this excess of mass from the set $D'$ to the set $D''$. To achieve this, we first move the excess of mass on $D'$ to the gate $C_1$, so that we can subsequently move it to the set $D''$. In mathematical terms, we consider an intermediate distribution $d\tilde{\nu}_1= \tilde{\rho}_1 dx$ where   
\begin{equation*}
\tilde{\rho}_1(x) := 
\begin{cases}
 \rho_2(x) , \:  & \te{if }   x \in D' \setminus C_1
 \\
 \beta \rho_1(x),\: &\te{if } x \in C_{1}
\\ \rho_1(x), \: &\te{if }  x \in D'',
\end{cases}
\end{equation*}
and where
$$\beta =  \frac{\int_{D'}( \rho_1(x) - \rho_2(x)  ) dx   + \int_{C_1} \rho_2(x)dx  }{\int_{C_1} \rho_1(x)dx };$$ 
the idea is to compare $\nu_1$ with $\tilde{\nu}_1$ and then compare $\tilde{\nu}_1$ with $\nu_2$.

First, note that there is a $\lambda'>1$ depending only on $\lambda$ and $D$ such that
$$  \frac{1}{\lambda'}   \leq \rho_1, \tilde{\rho}_1 \leq \lambda'.  $$
Since by construction $\nu_1(D')= \tilde{\nu}_1(D') $, we use Remark \ref{RemarkInduction} and the induction hypothesis to conclude that:
$$  d_\infty(\nu_1 \llcorner_{D'} , \tilde{\nu}_1\llcorner_{D'})  \leq C(\lambda', D') \| \rho_1 - \tilde{\rho}_1\|_{L^\infty(D')}= C(\lambda, D)\| \rho_1 - \tilde{\rho}_1\|_{L^\infty(D')}  , $$
where $\nu_1\llcorner_{D'}$ denotes the measure $\nu_1$ restricted to $D'$ and $\tilde{\nu}_1 | _{D'}$ the measure $\tilde{\nu}_1$ restricted to $D'$; notice that we can write $C(\lambda',D')= C(\lambda,D)$ because $\lambda'$ depends on $\lambda$ and $D$ only. An immediate consequence of the previous estimate is that
\begin{equation}
 d_\infty(\nu_1 , \tilde{\nu}_1)  \leq C(\lambda, D) \| \rho_1 - \tilde{\rho}_1\|_{L^\infty(D)}.   
 \label{auxLinfinityGeneralDomain}
\end{equation}
Given the definition of $\beta$, it is straightforward to show that 
$$   \| \rho_1 - \tilde{\rho}_1     \|_{L^\infty(D)} \leq C(\lambda, D)  \| \rho_1- \rho_2\|_{L^\infty(D)} $$
for some constant $C(\lambda, D)$ only depending on $D$ and $\lambda$. The previous inequality combined with  \eqref{auxLinfinityGeneralDomain} gives:
$$  d_\infty(\nu_1 , \tilde{\nu}_1) \leq C(\lambda , D)  \| \rho_1- \rho_2\|_{L^\infty(D)}.   $$

Now we compare $\tilde{\nu}_1$ with $\nu_2$. First of all note that $ \tilde{\nu}_1(D''_1) =\nu_2(D''_1) $ , where $D''_1:=D'' \cup C_1$. From the discussion proceeding Remark \ref{RemarkInduction} we know that $D''_1$ is bi-Lipschitz homeomorphic to the set $D''$ which in turn is bi-Lipschitz homeomorphic to the unit box. Thus, $D''_1$ is bi-Lipschitz homeomorphic to the unit box and hence proceeding as in the base case, we conclude that
$$ d_\infty(\tilde{\nu}_1\llcorner_{D''_1}, \nu_2\llcorner_{D''_1}  ) \leq C(\lambda, D) \| \tilde{\rho}_1 -\rho_2  \|_{L^\infty(D''_1)}  $$
and consequently
$$ d_\infty(\tilde{\nu}_1, \nu_2  ) \leq C(\lambda, D) \| \tilde{\rho}_1 -\rho_2  \|_{L^\infty(D)}.   $$

A straightforward computation shows that $\| \tilde{\rho}_1 - \rho_2\|_{L^\infty(D)} \leq C(\lambda, D)\| \rho_1 - \rho_2\|_{L^\infty(D)} $  and thus
$$ d_\infty(\tilde{\nu}_1, \nu_2  ) \leq C(\lambda, D) \|\rho_1 -\rho_2  \|_{L^\infty(D)}.   $$

Using the previous inequality, \eqref{auxLinfinityGeneralDomain} and the triangle inequality we obtain the desired result.
\medskip

\emph{Step 2:} Now consider an open, connected bounded domain $D$ with Lipschitz boundary. By Remark 5.3 in \cite{BallZ} there exists an open set $\tilde{D}$ with smooth boundary which is bi-Lipschitz homeomorphic to $D$. In particular $\tilde{D}$ is bounded and connected. By propositions \ref{HPropertySmoothDomains} and Step 1, the result holds for $\tilde{D}$. Proceeding as in the base case in Step 1 and using the fact that $D$ and $\tilde{D}$ are bi-Lipschitz homeomorphic we obtain the desired result.  
\end{proof}

Now we are ready to prove Theorem \ref{main}.

\begin{proof}[Proof of Theorem \ref{main} ] 

Let us consider the function $\phi: \mathbb{N} \rightarrow (0,\infty)$, which is given by
\begin{equation}
  \phi(n)=\begin{cases}
    \frac{\ln(n)^{1/d}}{n^{1/d}}, \: & \te{if } \: d\geq 3 \smallskip \\
    \frac{\ln(n)^{3/4}}{n^{1/2}} , \: & \te{if } \: d=2.
\end{cases}
\end{equation}

\emph{Step 1.} We first prove the result for domains $D$ satisfying the (WP) property. The proof is by induction on $k$, the number of polytopes used in the definition of the property (WP). In what follows $C$ may change from line to line, but always represents a constant that depends only on $\lambda$ and $D$. Furthermore, since the probability that no sample point belongs to a boundary of one of the $k$ polytopes is zero, we assume 
without the loss of generality that no sample point belongs to the boundary of any of the polytopes considered.
 
\emph{Base Case.} Suppose that $D$ is a domain satisfying property (WP) with one polytope. Then, $\overline{D}$ is bi-Lipschitz homeomorphic to the unit box. That is, there exists a bi-Lipschitz mapping $\psi: \overline{D} \rightarrow [0,1]^d$. Given a density $\rho: D \rightarrow (0,\infty)$ satisfying \eqref{DensityBound}, we define measure  $\tilde{ \nu}$ on $(0,1)^d$  to be the push-forward of $\nu$ by $\psi$:
$$  \tilde{\nu}:= \psi_{\sharp}\nu. $$
Given the i.i.d. random points $X_1, \dots, X_n$ on $D$ distributed according to $\nu$ we note that
$$ \tilde{X}_i =\psi(X_i) \; \te{ for } i=1,\dots,n  $$ 
are i.i.d random points on $(0,1)^d$  distributed according to $\tilde{\nu}$.

As in the proof of Theorem \ref{PropositionLinfinityWinfinityGeneralDomain} we use the fact that $\psi$ is bi-Lipschitz to deduce that $\tilde{\nu}$ has a density $\tilde{\rho}$ satisfying
$$   \frac{1}{\tilde{\lambda}} \leq \tilde{\rho} \leq \tilde{\lambda}    $$
where $\tilde{\lambda} = \lambda \max\{\Lip(\psi)^d, \Lip(\psi^{-1})^d \}$. From Theorem \ref{main} applied to the unit cube, we know that for $\alpha > 2$, except on a set with probability $O(n^{-\alpha /2})$,
\begin{equation*}
 d_\infty(\tilde{\nu},\tilde{\nu}_n)\leq C\phi(n),
\end{equation*}
which implies
\begin{equation*}
 d_\infty(\nu,\nu_n)\leq \Lip(\psi^{-1}) d_\infty(\tilde{\nu}, \tilde{\nu}_n)  \leq C\phi(n).
\end{equation*}
where $C$ only depends on $\lambda$, $D$ and $\alpha$.

\emph{Inductive Step.} Suppose that the theorem is true for any domain in $\R^d$ satisfying the (WP) property with $k-1$ polytopes. Let $D$ be a domain satisfying the (WP) property with $k$ polytopes and let $\rho: D \rightarrow (0,\infty)$ be a density function satisfying \eqref{DensityBound}. Consider $\tilde{\rho}_n : D \rightarrow D$ the density function given by
\begin{equation}
  \tilde{\rho}_n(x)=
   \begin{cases}  
  \frac{\nu_n(D')}{\nu(D')} \rho(x), & \te{if } \: x \in D'  \smallskip \\ 
  \frac{\nu_n(D'')}{\nu(D'')} \rho(x), \; & \te{if } \: x \in D'',
\end{cases}
\end{equation}
where $D'$ and $D''$ are as in Remark \ref{RemarkInduction}. Let $\tilde{\nu}_n$ be the measure $d \tilde{\nu}_n=\tilde{\rho}_n dx$ and note that $\nu_n(D')=\tilde{\nu}_n(D')$ and $\nu_n(D'') = \tilde{\nu}(D'')$. Also, notice that
\begin{equation}
\|\rho - \tilde{\rho}_n \|_{L^\infty(D)} \leq C |\nu_n(D') - \nu(D') |,   
\label{AuxMatchingGeneral0}
\end{equation}
for some constant  $C$ that depends only on $\lambda$ and $D$.

To give some probabilistic estimates on $|\nu_n(D') - \nu(D') |$, we use Chernoff's inequality \eqref{chernoff} to conclude that \begin{equation}
 P \left( |\nu_n(D') - \nu( D' ) |   >  \sqrt{ \frac{\alpha\ln(n)}{n}} \right) \leq 2 n^{-2\alpha}.  
 \label{AuxMatchingGeneral1}
\end{equation}

Denote by $\Omega_n$ the event in which $|\nu_n(D') - \nu(D) | \leq \sqrt{ \frac{\alpha\ln(n)}{n}}$. By \eqref{AuxMatchingGeneral0} and Theorem \ref{PropositionLinfinityWinfinityGeneralDomain} (from its proof, it holds for well partitioned domains), given $\Omega_n$ we have:
\begin{equation}
 d_\infty(\nu, \tilde{\nu}_n) \leq C \frac{\ln(n)^{1/2}}{n^{1/2}}.  
\label{AuxMatchingGeneral2} 
\end{equation}
We use the fact that $\nu_n(D')=\tilde{\nu}_n(D')$ and  $\nu_n(D'')=\tilde{\nu}_n(D'')$ to estimate $d_\infty(\tilde{\nu}_n, \nu_n)$. Indeed, by the induction hypothesis, given the event $\Omega_n$, with probability at least $1- c n^{-\alpha/2}$ 
$$   d_\infty( \tilde{\nu}_n\llcorner_{D'},  \nu_{n}\llcorner_{D'}) \leq    C \phi(n) \; \te{ and } \;
   d_\infty( \tilde{\nu}_n\llcorner_{D''},  \nu_{n}\llcorner_{D''}) \leq    C \phi(n). $$
 In case the previous inequalities hold we conclude that
$$   d_\infty(\tilde{\nu}_n , \nu_{n} ) \leq \max \left\{ d_\infty( \tilde{\nu}_n\llcorner_{D'},  \nu_{n}\llcorner_{D'}) , d_\infty( \tilde{\nu}_n\llcorner_{D''},  \nu_{n}\llcorner_{D''}) \right\}  \leq C \phi(n). $$
Thus, given $\Omega_n$, with probability at least $1- c n^{-\alpha/2}$ 
$$ d_\infty(\tilde{\nu}_n,  \nu_n) \leq C \phi(n).   $$

From the previous discussion, \eqref{AuxMatchingGeneral1} and \eqref{AuxMatchingGeneral2} we conclude that with probability at least $1- c n^{-\alpha/2}$,
$$ d_\infty(\nu, \nu_n) \leq C \phi(n) + C \frac{\ln(n)^{1/2}}{n^{1/2}}  \leq C \phi(n).$$

\emph{Step 2.} To prove the theorem for an arbitrary open, connected, bounded domain $D$ with Lipschitz boundary it is enough to notice that by Remark 5.3 in \cite{BallZ} there exists an open set $\tilde{D}$ with smooth boundary which is bi-Lipschitz homeomorphic to $D$. In particular $\tilde{D}$ is bounded and connected. By Proposition \ref{HPropertySmoothDomains} the result holds for $\tilde{D}$ by Step 1. Proceeding as in the base case in Step 1 and using the fact that $D$ and $\tilde{D}$ are bi-Lipschitz homeomorphic we obtain the desired result.
\end{proof}

\subsection*{Acknowledgments}
The authors are grateful to Felix Otto and Zilin Jiang for enlightening discussions. The authors are also grateful to Michel Talagrand for letting them know of the elegant proofs of matching results in \cite{TalagrandGenericChain} and generously sharing the relevant chapters of his upcoming book \cite{TalagrandNewBook}. DS is grateful to  NSF (grant DMS-0908415).The research was also supported by NSF PIRE grant  OISE-0967140.
Authors are thankful to the Center for Nonlinear Analysis (NSF grant DMS-0635983) for its support.

\appendix
\section{Proof of Proposition \ref{HPropertySmoothDomains}}
\label{Appendix}

Consider $D$ to be a bounded open set with smooth boundary. For $\veps>0$ we denote by $\partial_\veps D$ the set of points $x \in \R^d$ with $d(x,\partial D) \leq \veps $. The fact that $\partial D$ is a smooth compact manifold implies that there exists $0<\veps_0<1$ such that for every $ x\in \partial_{\veps_0} D$ there is a unique  point $P(x)$ on $\partial D$ closest to $x$. Furthermore the function $P: x \in \partial_{2 \veps_0} D\mapsto P(x)$ is  smooth. 

For a given $z \in \partial D$ we let $\vec{n}_z$ be the unit outer normal vector to $\partial D$ at the point $z$. The fact that $\partial D$ is a smooth manifold in $\R^d$ also implies that the outer unit normal vector field changes smoothly over $\partial D$. 

We consider the signed distance function to $\partial D$,   $g : \partial_{2 \veps_0} D \longrightarrow \R$
\begin{equation}
 g(y) :=
 \begin{cases}
  \quad\! \dist(y, \partial D), \: & \te{if } \: y \in D^c 
\\ - \dist(y, \partial D), \: & \te{if } \: y \in D .
\end{cases}
\end{equation}
This function is smooth and its gradient is given by
\begin{equation}
\nabla g(y) = \vec{n}_{P(y)}.
\label{GradientSignedDistance}
\end{equation}
 We remark that for every $y \in \partial _{\veps_0} D$, $g(y) =  |y-P(y) |$  if $y \not \in D$ and $g(y)= -  |y-P(y) |$ if $y \in D$.

For a fixed $0<\veps< \veps_0$ consider the family of open balls $\left\{ B(x,\veps^2) \right\}_{x \in \partial D }$. This is an open cover of the set $\partial D$ which is compact. Hence, there exists a finite subcover $ \left\{ B(x_1, \veps^2) , \dots , B(x_N, \veps^2) \right\}$ of $\partial D$. To fix some notation, we let $\vec{n}_{i}$ be the vector $\vec{n}_{x_i}$ and we let $T_i$ be the tangent plane to $\partial D$ at the point $x_i$.  Let $V_1, \dots V_N$ be the Voronoi cells induced by the points $x_1, \dots, x_N$; that is  we let $V_i$ be the set 
$$  V_i := \left\{ y \in \R^d \: : \: |x_i - y | \leq |x_j - y|, \: \: \forall j \not =i  \right\}.  $$

Note that for every $t \in [-\veps, \veps]$ we have $P(x_i + t \vec{n}_i)=x_i$. In particular,
\begin{equation}
 |x_i + t \vec{n}_i  -  x_i | < |   x_i + t \vec{n}_i - x_j  |,   
 \label{AlongNormalDirectionVoronoi}
\end{equation}
for every $j \not = i$. Consider $\tilde{x}_i$ to be the point $\tilde{x}_i:= -\frac{\veps}{2} \vec{n}_i + x_i $ and Let  $T_i^+ := \veps \vec{n}_i + T_i $, $T_i^-:= \veps \vec{n}_i + T_i$ be the planes parallel to $T_i$ passing though the points $\veps \vec{n}_i + x_i $ and $-\veps \vec{n}_i  +x_i$ respectively. We denote by $S_i$ the closed strip delimited by the planes $T_i^{+} $ and $T_i^{-}$ and let $A_i:= V_i \cap S_i$. See Figure \ref{sunset}.

We first want to show that the region $A_i$ is contained in a circular cylinder whose axis is the line passing through the point $x_i$ with direction $\vec{n}_i$ and whose radius is small compared to $\veps$. To achieve this, for a point $y \in \R^d$ denote by $y_i$ the projection of $y$ along the line passing through $x_i$ with direction $\vec{n}_i$.

\emph{Claim 1:} For all  $0<\veps < \frac{\veps_0}{2}$ small enough,  $y \in A_i$  implies that $|y -y_i| \leq 4 \veps^{3/2}$.
 
To prove the claim suppose for the sake of contradiction that there is $y \in A_i$ with $ | y -y_i| \geq 4 \veps^{3/2}$. Since $y \in S_i$, in particular $|y_i- x_i | = \dist(y_i,\partial D ) \leq \veps$. Consider a point $\tilde{y}$ in the segment $[y,y_i]$ such that $ 4 \veps^{3/2} \geq | \tilde{y} -y_i| \geq 3 \veps^{3/2}$.
Then $|\tilde y - x_i| \leq |\tilde y - y_i| + |y_i - x_i| < 4 \veps^{3/2} + \veps < 2 \veps$ if $\veps$ is small enough.
Thus $\tilde y - P(\tilde y)| < 2 \veps$. 
Note also that $y \in A_i$ and $y_i \in A_i$ (from \eqref{AlongNormalDirectionVoronoi}). Since the set $A_i$ is convex, we conclude that $\tilde{y} \in A_i$. To get to a contradiction we want to show that $|\tilde{y}-x_k | < |\tilde{y}-x_i|$ for some $k$;  this would imply that $\tilde{y} \not \in V_i$ which indeed would be a contradiction given that $\tilde{y} \in A_i$.

Note that $P(\tilde{y}) \in B(x_k,\veps^2)$ for some $k$. Thus
\begin{align}
\begin{split}
 |\tilde{y}-x_k|^2 &\leq  \left( |\tilde{y}- P(\tilde{y})| + |P(\tilde{y}) - x_k|  \right)^2
 \\ & = |\tilde{y}- P(\tilde{y})|^2 + 2 |\tilde{y}- P(\tilde{y})|\cdot |P(\tilde{y}) - x_k| +|P(\tilde{y}) - x_k| ^2
 \\ & \leq |\tilde{y}- P(\tilde{y})|^2 + 4 \veps^3 + \veps^4.
 \end{split}
 \label{AuxAppendix0}
\end{align}
Furthermore, note that 
\begin{align}
\begin{split}
|\tilde{y}-x_i|^2 &= |y_i-x_i|^2 + |\tilde{y}- y_i|^2
\\& = g(y_i)^2+  |\tilde{y}- y_i|^2
\\& = g(\tilde{y})^2 + g(y_i)^2 - g(\tilde{y})^2 + |\tilde{y}- y_i|^2
\\& \geq   |\tilde{y}-P(\tilde{y})|^2 - | g(y_i)^2-  g(\tilde{y})^2|  + |\tilde{y}- y_i|^2.
\end{split}
\label{AuxAppendix1}
\end{align}
Since $g$ is smooth in $\overline{\partial_{\veps_0} D}$, there exists $M$ such that $M \geq  \| D^2 g(x) \|$ for all $x \in \overline{\partial_{\veps_0} D}$. 
By \eqref{GradientSignedDistance}, the gradient of the signed distance function $g$ at the point $y_i$ is equal to  $ \vec{n}_i $. Since $\tilde{y} - y_i $ is orthogonal to $\vec{n}_i$,
by Taylor expansion
$ |g(\tilde{y}) - g(y_i)| = |g(\tilde{y}) - g(y_i) - Dg(y_i) \cdot (\tilde y - y_i)| \leq M |\tilde{y}-y_i|^2 $.
Thus $|g(\tilde{y})^2 - g(y_i)^2| = |g(\tilde{y}) - g(y_i)|\cdot |g(\tilde{y}) + g(y_i)| \leq 3M \veps |\tilde{y}-y_i|^2 $. Using \eqref{AuxAppendix1} we deduce that
$$ |\tilde{y}-x_i|^2 \geq  |\tilde{y}-P(\tilde{y})|^2  + ( 1-3M\veps)  |\tilde{y}- y_i|^2,  $$
Therefore for small enough $\veps>0$ 
$$ |\tilde{y}-x_i|^2 \geq  |\tilde{y}-P(\tilde{y})|^2  +5 \veps^{3}.  $$
Combining the previous inequality with \eqref{AuxAppendix0} we deduce that $|\tilde{y} - x_i| > |\tilde{y}-x_k|$. This proves the claim.

\medskip

Consider the circular cylinder whose axis is the line passing through the point $x_i$ with direction $\vec{n}_i$ and whose radius is $4 \veps^{3/2}$.   We let $C_i^+$ be the portion of the cylinder contained in $S_i$. 

By \eqref{AlongNormalDirectionVoronoi} we can find a circular cylinder of smaller radius, whose axis is the same as that of $C_i^+$, but such that the portion of it contained in $S_i$, denoted by $C_i^{-}$, satisfies:
$$  C_i^{-} \subseteq A_i  \subseteq C_i^{+}.$$
See Figure \ref{sunset}.

\begin{figure}[h] 
\centering
\includegraphics[width=8cm]{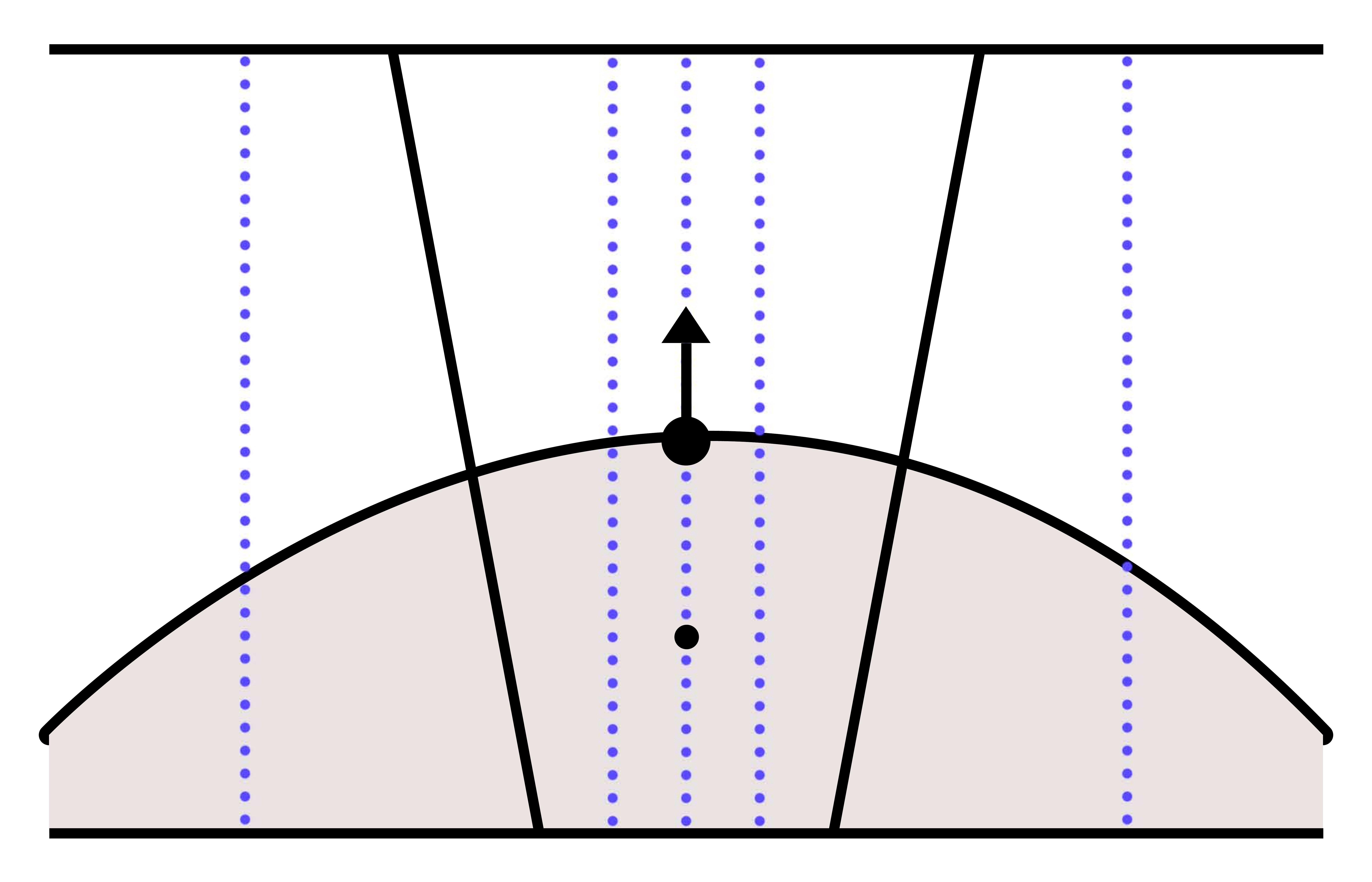}
\put(-70,20){\Large $D$}
\put(-60,128){$C_i^+$}
\put(-113,128){$C_i^-$}
\put(-220,14){$T_i^-$}
\put(-220,128){$T_i^+$}
\put(-220,70){$S_i$}
\put(-112,63){$x_i$}
\put(-112,80){$\vec{n}_i$}
\put(-112,30){$\tilde{x}_i$}
\put(-150,90){$A_i$}
\caption{  }
\label{sunset}
\end{figure}
\emph{Claim 2.} Let $0<\veps< \frac{\veps_0}{2}$ be small enough. Then, there exists a map $\Phi_i: A_i \cap \overline{D} \rightarrow A_i$ which is a bi-Lipschitz homeomorphism. In particular, since $A_i$ is a closed convex body with nonempty interior, we conclude that $A_i \cap \overline{D}$ is bi-Lipschitz homeomorphic to the unit cube.  

To prove the claim fix $0< \veps $ so that in particular the conclusions from Claim 1 hold.  From the bound on the second derivative of $g$ and since the radius of $C_i^+$ is $4\veps^{3/2}$, we deduce that there exists a universal constant $L>0$  such that
\begin{equation}
| \vec{n}_z - \vec{n}_i| \leq  L \veps^{3/2}, \: \: \forall z \in \partial D \cap A_i,
\label{DiffrenceNormal}
\end{equation}
due to the fact that $A_i \subseteq C_i^+$.

We now turn to constructing the bi-Lipschitz mapping between $\overline D \cap A_i$ and $A_i$.
We do that by linear mappings along rays emanating from $\tilde x_i$. 
Consider $\mathcal{S}^{d-1}$ the set of all unit vectors in $\R^d$.
For $\vec{n} \in \mathcal{S}^{d-1}$ define $s_{\vec{n}}$ and $t_{\vec{n}}$ by 
$$  s_{\vec{n}}:= \sup \left\{ s>0  \: : \:  \tilde{x}_i + s \vec{n}  \in \overline{D} \cap A_i \right\} ,  $$
$$ t_{\vec{n}}:= \sup \left\{ t>0  \: : \:  \tilde{x}_i + t\vec{n}  \in  A_i  \right\} .      $$

Since $C_i^- \subseteq A_i\subseteq C_i^+$, we deduce that both functions $\vec{n} \in \mathcal{S}^{d-1} \mapsto s_{\vec{n}}$ and  $\vec{n } \in \mathcal{S}^{d-1} \mapsto t_{\vec{n}}$ are bounded above and below by positive constants. 

Now, note that for every $\vec{n} \in \mathcal{S}^{d-1}$, we have $s_{\vec{n}} \leq t_{\vec{n}}$. Moreover, by \eqref{DiffrenceNormal} and the fact that $A_i \subseteq C_i^+$, we deduce that if $ s_{\vec{n}} < t_{\vec{n}} $ then 
$$ | \vec{n}_i - \vec{n}| \leq L \veps^{3/2},   $$
where $L$ is a universal constant which is not necessarily the same as in \eqref{DiffrenceNormal}. In particular, by choosing $\veps$ to be small enough we can assume that if $s_{\vec{n}} < t_{\vec{n}}$ then, the ray starting at $\tilde{x}_i$ with direction $\vec{n}$ only intersects $\partial D \cap A_i$ at one point. This fact, together with the smoothness of the  outer normal vector field implies that the map  $\vec{n} \in \mathcal{S}^{d-1} \mapsto s_{\vec{n}}$ is Lipschitz. On the other hand, since the set $A_i$ is a convex set with piecewise smooth boundary ( a convex polytope), we deduce that the function $\vec{n } \in \mathcal{S}^{d-1} \mapsto t_{\vec{n}}$ is Lipschitz as well.

Consider the map $\Phi_i: \overline{D}\cap A_i \rightarrow A_i$ defined as follows. Set $\Phi_i(\tilde{x}_i)= \tilde{x}_i$.  For  $x \in \overline{D} \cap A_i $, $x \not = \tilde{x}_i$ we can write  $x = \tilde{x}_i + s \vec{n} $, for some $\vec{n} \in \mathcal{S}^{d-1}$ and for some $0 < s \leq s_{\vec{n}}$; we let $\Phi_i(x)$ be 
\begin{equation*}
 \Phi_i(x):= \tilde{x}_i + \frac{st_{\vec{n}}}{s_{\vec{n}}} \vec{n}.
\end{equation*}

Since both functions $\vec{n} \in \mathcal{S}^{d-1} \mapsto s_{\vec{n}}$ and  $\vec{n } \in \mathcal{S}^{d-1} \mapsto t_{\vec{n}}$ are bounded above and below by positive constants and are Lipschitz, we deduce that the map $\Phi_i$ is a bi-Lipschitz homeomorphism between $\overline{D} \cap A_i$ and $A_i$. This proves the claim.

\emph{Claim 3.} For  any $\veps<1$ it holds that $\partial D \cap ( V_i \setminus S_i )= \emptyset.$ 
To prove this claim, assume for the sake of contradiction that there exists $x \in \partial D \cap ( V_i \setminus S_i ) $. Since $x \not \in S_i$, it follows that $|x-x_i| \geq \veps$. On the other hand, given that $x\in \partial D$, we know there exists $k$ such that $x \in B(x_k, \veps^2)$. Since $\veps <1$, we deduce that $|x-x_k| < |x-x_i|$ and thus $x \not \in V_i$. This is a contradiction.

Now we have all the ingredients needed to prove Proposition \ref{HPropertySmoothDomains}. Indeed, take $\veps>0$ small enough so that  all of the conclusions of all the previous claims hold. From Claim 3, we deduce that every $V_i$ can be partitioned into three convex polytopes. One which intersects $\partial D$, namely $A_i= V_i \cap S_i$ and other two polytopes, one which is contained in $\inte(D^c)$ and another one contained in $D$. We denote the later one by $\hat{A}_i$.  We consider the family $\left\{ A_1, \hat{A}_1, \dots, A_N , \hat{A}_N \right\}$ of convex polytopes. This family covers $D$ and is such that properties (1) and (2) from Definition \ref{DefinitionHProperty} are satisfied. Moreover, given that $\hat{A}_i \subseteq D$ and given that $\hat{A}_i$ is convex, we deduce that $\hat{A}_i$ satisfies property (3) automatically, since all closed convex bodies with piecewise smooth boundary are bi-Lipschitz homeomorphic. Finally, Claim 2 implies that property (3) holds for each of the $A_i$. All together this implies that $D$ satisfies the (WP) property.

\bigskip
\noindent {\bf Acknowledgments.}
 DS is grateful to  NSF (grant DMS-1211760) for its support. The authors are thankful to Zilin Jiang and to Felix Otto for enlightening conversations. The authors would like to thank the Center for Nonlinear Analysis of the Carnegie Mellon University for its support.

\bibliography{Biblio}
\bibliographystyle{siam}

\end{document}